\numberwithin{equation}{section}
\numberwithin{figure}{section}
\DeclareSymbolFont{bbold}{U}{bbold}{m}{n}
\DeclareSymbolFontAlphabet{\mathbbold}{bbold}
\theoremstyle{plain} \newtheorem{theorem}{Theorem}[section]
\theoremstyle{plain} \newtheorem{proposition}[theorem]{Proposition}
\theoremstyle{plain} \newtheorem{lemma}[theorem]{Lemma}
\theoremstyle{plain} 
\theoremstyle{definition} \newtheorem{definition}[theorem]{Definition}
\theoremstyle{definition} \newtheorem{notation}[theorem]{Notation}
\theoremstyle{remark} \newtheorem{remark}[theorem]{Remark}
\theoremstyle{remark} \newtheorem{example}[theorem]{Example}
\theoremstyle{definition} \newtheorem{assumption}{Assumption}
\newcommand{\bD}{\mathbb{D}}
\newcommand{\bE}{\mathbb{E}}
\newcommand{\bK}{\mathbb{K}}
\newcommand{\bN}{\mathbb{N}}
\newcommand{\bP}{\mathbb{P}}
\newcommand{\bQ}{\mathbb{Q}}
\newcommand{\bR}{\mathbb{R}}
\newcommand{\bS}{\mathbb{S}}
\newcommand{\bX}{\mathbb{X}}
\newcommand{\cB}{\mathcal{B}}
\newcommand{\cH}{\mathcal{H}}
\newcommand{\cK}{\mathcal{K}}
\renewcommand{\AA}{\mathbf{A}}
\newcommand{\one}{\mathbf{1}}
\newcommand{\bKH}{\hat{\bK}}
\newcommand{\bKD}{\tilde{\bK}}
\newcommand{\neutral}{\mathbf{e}}
\renewcommand{\epsilon}{\varepsilon}
\newcommand{\eps}{\epsilon}
\newcommand{\eqd}{\overset{d}{\sim}}
\renewcommand{\phi}{\varphi}
\renewcommand{\Re}{\mathrm{Re}\,}
\newcommand{\salg}{\mathfrak{B}}
\newlength{\querylen}
\begin{document}

\title[Polar decomposition of scale-homogeneous measures]{Polar decomposition of
  scale-homogeneous measures with application to L\'evy measures of strictly
  stable laws}

\author[S.N. Evans]{Steven N. Evans}
\address{Department of Statistics \#3860\\
 367 Evans Hall \\
 University of California \\
  Berkeley, CA  94720-3860 \\
   USA} 
\email{evans@stat.berkeley.edu}

\author[I. Molchanov]{Ilya Molchanov}
\address{University of Bern \\
 Institute of Mathematical Statistics and Actuarial Science \\
 Sidlerstrasse 5 \\
 CH-3012 Bern \\
 SWITZERLAND}
\email{ilya.molchanov@stat.unibe.ch}

\thanks{SNE supported in part by NSF grant DMS-09-07630 and NIH grant
  1R01GM109454-01. IM supported in part by Swiss National Science
  Foundation grants 200021-137527 and 200021-153597.}

\subjclass[2010]{28A50, 28C10, 60B15, 60E07} 

\keywords{disintegration; infinite divisibility;  LePage representation}

\date{\today}

\begin{abstract}
  A scaling on some space is a measurable action of the group of
  positive real numbers.  A measure on a measurable space equipped
  with a scaling is said to be $\alpha$-homogeneous for some nonzero
  real number $\alpha$ if the mass of any measurable set scaled by any
  factor $t > 0$ is the multiple $t^{-\alpha}$ of the set's original
  mass.  It is shown rather generally that given an
  $\alpha$-homogeneous measure on a measurable space there is a
  measurable bijection between the space and the Cartesian product of
  a subset of the space and the positive real numbers (that is, a
  ``system of polar coordinates'') such that the push-forward of the
  $\alpha$-homogeneous measure by this bijection is the product of a
  probability measure on the first component (that is, on the
  ``angular'' component) and an $\alpha$-homogeneous measure on the
  positive half-line (that is, on the ``radial'' component). This
  result is applied to the intensity measures of Poisson processes
  that arise in L\'evy-Khinchin-It\^o-like representations of
  infinitely divisible random elements. It is established that if a
  strictly stable random element in a convex cone admits a series
  representation as the sum of points of a Poisson process, then it
  necessarily has a LePage representation as the sum of i.i.d. random
  elements of the cone scaled by the successive points of an
  independent unit intensity Poisson process on the positive half-line
  each raised to the power $-\frac{1}{\alpha}$.
\end{abstract}

\maketitle

\section{Introduction}
\label{sec:introduction}

One may consider infinitely divisible random elements in very general
settings where there is a binary operation on the carrier space such
that it makes sense to speak of the ``addition'' of two random
elements.  Once one rules out pathologies such as random elements with
idempotent distributions (that is, probability measures that are equal
to their convolution with themselves), the main step towards
understanding infinitely divisible random elements usually consists of
establishing their representation as a sum of the points of a Poisson
random measure and possibly also constant terms and random elements
with Gaussian-like distributions. Results of this type start from the
classical L\'evy-Khinchin theorem for Euclidean space, with further
extensions to Banach spaces \cite{arauj:gin80,ros90}, groups
\cite{par67}, function spaces \cite{bas:ros13,kab:stoev12,raj:ros89},
and general semigroups \cite{MR974112}.

A {\em convex cone} is a semigroup equipped with a scaling, that is,
with an action of the group of positive real numbers that interacts
suitably with the semigroup operation.  There is a natural notion of
stability for random elements of such a carrier space that extends the
usual notion of stability for random elements of Euclidean space, see
\cite{dav:mol:zuy08}.  For strictly stable random elements, the
intensity measure of the corresponding Poisson process -- the L\'evy
measure of the random element -- is scale-homogeneous, that is,\ its value
on any set scaled by any positive real number equals the value on the
original set up to a factor given by a (fixed) power of the scaling
constant.  A scale-homogeneous measure on $\bR^d$ that is finite
outside a neighborhood of the origin can be represented in polar
coordinates as the product of a probability measure on the unit sphere
(the directional or angular component) and a scale-homogeneous measure
on the positive reals (the radial component). A nontrivial
scale-homogeneous measures on the positive reals that is finite
outside neighborhoods of the origin is necessarily of the form $c \,
\alpha \, t^{-(\alpha + 1)} \, dt$ for some constants $c > 0$ and
$\alpha > 0$.  The unit sphere need not be the unit sphere for the
Euclidean norm: the unit sphere for any norm on $\bR^d$ serves equally
well.  The situation becomes more complicated for more general spaces
where there may be no natural candidates to play the r\^ole of the
unit sphere.

Such a polar decomposition of the L\'evy measure leads to a
representation of stable random elements as the sum of a series
obtained from a sequence of i.i.d. random elements scaled by the
points of a Poisson process on the positive real line with intensity
measure of the form $\alpha \, t^{-(\alpha + 1)} \, dt$ or,
equivalently by points of the form $\Gamma_k^{-\frac{1}{\alpha}}$,
where $(\Gamma_k)_{k \in \bN}$ are the points of a unit intensity
Poisson process on the positive real line. Such a representation was
first obtained in \cite{lep:wood:zin81} for Euclidean space and since
then it and its various extensions have been called the {\em LePage
  series} representation. For example, such a representation was given
for random elements of a suitable space of functions with the
semigroup operation being addition in \cite{sam:taq94} and with the
semigroup operation being maximum in \cite{haan84}.

Section~\ref{sec:decomp-homog-meas} presents the main result that
concerns a polar decomposition of scale-homogeneous measures on quite
general spaces. Proposition~\ref{proposition:renorm} is inspired by
the argument used in \cite[Th.~10.3]{evan:mol15} to derive the LePage
series representation for stable metric measure spaces. A similar
rescaling argument was used in the context of max-stable sequences in
\cite{haan84}.

In Section~\ref{sec:strictly-stable-rand} we review the definition of
a convex cone, introduce stable random elements on such carrier
spaces, and apply the general decomposition results to derive the
series representation of strictly stable random elements in convex
cones. Such a series representation is a consequence of the
polar decomposition of the L\'evy measure and expresses any strictly
stable random element as a sequence of i.i.d. random elements scaled
by a power of the successive points of the unit intensity Poisson
point process on the positive half line.  The key hypotheses we require are a
one-sided continuity property of bounded semicharacters and the existence of a
measurable transversal for the action of the positive real numbers on
the carrier space. 

Section~\ref{sec:applications} presents several applications, some in
well-known settings and others in more novel contexts, where our
approach streamlines the derivation of the LePage
series representation of a stable random elements.

\section{Decomposition of scale-homogeneous measures}
\label{sec:decomp-homog-meas}

\begin{definition}
  Let $\bX$ be a measurable space with a $\sigma$-algebra $\salg$, and
  let $(x,t)\mapsto tx\in\bX$ be a \emph{pairing} of $x\in\bX$ and
  $t\in\bR_{++}:=(0,\infty)$ which is $\salg \otimes \cB(\bR_{++}) /
  \salg$-measurable, where $\cB(\bR_{++})$ is the Borel
  $\sigma$-algebra on $\bR_{++}$. Assume that this pairing is an {\em
    action} of the group $\bR_{++}$ (where the group operation is the
  usual multiplication of real numbers) on $\bX$; that is, for all
  $t,s>0$ and $x\in\bX$, $t(sx)=(ts)x$ and $1 x = x$.  We refer to
  such a pairing as a \emph{scaling}.
\end{definition} 

\begin{remark}
  \label{remark:scaling-one-s}
  For $B\in\salg$ and $t\in\bR_{++}$, set $tB:=\{tx:\; x\in B\}$.  The
  measurability of the map $x\mapsto tx$ for each $t\in\bR_{++}$
  yields that $s B=\{x:\; (s^{-1})x\in B\}\in\salg$ for all
  $B\in\salg$ and $s\in\bR_{++}$.
\end{remark}  

\begin{definition}
  A measure $\nu$ on $\bX$ is said to be \emph{$\alpha$-homogeneous}
  for some $\alpha\in\bR \setminus \{0\}$ if
  \begin{equation}
    \label{eq:nu-x-homogeneous}
    \nu(sB)=s^{-\alpha}\nu(B),\qquad B\in\salg,\; s>0. 
  \end{equation}
\end{definition}

\begin{remark}
  By redefining the pairing to be $(x,t) \mapsto t^{-1} x$, we may
  assume that $\alpha > 0$ and we will do so from now on.  Moreover,
  by redefining the pairing to be $(x,t) \mapsto t^{\frac{1}{\alpha}}
  x$ we could even assume that $\alpha = 1$, but we choose not to do
  so in order that the notation $t x$ retains its usual meaning when
  $\bX$ is a subset of some vector space over $\bR$.
\end{remark}

We begin with a technical lemma.

\begin{lemma}
  \label{lemma:extend_homo_completion}
  Suppose that $\nu$ is an $\alpha$-homogeneous measure on $(\bX,
  \salg)$.  Write $\salg^\nu$ for the completion of the
  $\sigma$-algebra $\salg$ with respect to the measure $\nu$.  Then $s
  B \in \salg^\nu$ with $\nu(sB)=s^{-\alpha}\nu(B)$ for all $s > 0$
  and $B \in \salg^\nu$.
\end{lemma}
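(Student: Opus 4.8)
The plan is to run everything through the standard ``sandwich'' description of the completion. Recall that $C \in \salg^\nu$ if and only if there exist $B_-, B_+ \in \salg$ with $B_- \subseteq C \subseteq B_+$ and $\nu(B_+ \setminus B_-) = 0$, and that the extension of $\nu$ to $\salg^\nu$ is then given by $\nu(C) := \nu(B_-) = \nu(B_+)$. So the first step is to record the elementary observation that, for each fixed $s > 0$, the map $\phi_s \colon x \mapsto sx$ is a bijection of $\bX$ onto itself: the action axioms give $\phi_s \circ \phi_{s^{-1}} = \phi_{s^{-1}} \circ \phi_s = \phi_1 = \mathrm{id}$, so $\phi_s^{-1} = \phi_{s^{-1}}$. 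Moreover $\phi_s$ and $\phi_s^{-1}$ are both $\salg/\salg$-measurable by the measurability hypothesis on the scaling (this is exactly Remark~\ref{remark:scaling-one-s}), so $\phi_s$ restricts to an automorphism of the measurable space $(\bX, \salg)$. In particular $sB \in \salg$ for every $B \in \salg$, and the operation $B \mapsto sB$ commutes with complements, countable unions and countable intersections, hence with set differences; by injectivity of $\phi_s$ one has $sB_+ \setminus sB_- = s(B_+ \setminus B_-)$ and $sB_- \subseteq sB_+$ whenever $B_- \subseteq B_+$.

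With that in hand the lemma is immediate. Given $s > 0$ and $C \in \salg^\nu$, I would pick $B_- \subseteq C \subseteq B_+$ in $\salg$ with $\nu(B_+ \setminus B_-) = 0$ and apply $\phi_s$: since $\phi_s$ preserves inclusions, $sB_- \subseteq sC \subseteq sB_+$; since $B_+ \setminus B_- \in \salg$, so is $sB_+ \setminus sB_- = s(B_+ \setminus B_-)$, and by $\alpha$-homogeneity \eqref{eq:nu-x-homogeneous} applied to $B_+ \setminus B_-$ we get $\nu(sB_+ \setminus sB_-) = s^{-\alpha}\nu(B_+ \setminus B_-) = 0$. This exhibits $sC$ as a set caught between two members of $\salg$ whose difference is $\nu$-null, so $sC \in \salg^\nu$. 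Finally $\nu(sC) = \nu(sB_-) = s^{-\alpha}\nu(B_-) = s^{-\alpha}\nu(C)$, where the middle equality is \eqref{eq:nu-x-homogeneous} on $\salg$ and the two outer equalities are the definition of the extended measure.

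There is no real obstacle here — this is the ``technical lemma'' the paper advertises. The only point that deserves a moment's care, and the place where the two standing hypotheses are genuinely used, is the assertion that $B \mapsto sB$ carries $\salg$ into $\salg$ and commutes with the Boolean operations: one needs the scaling to be an \emph{action} (so that $\phi_s$ is invertible) and to be \emph{measurable} (so that $\phi_s$ and $\phi_s^{-1}$ are measurable as functions of the single variable $x$). Once that bookkeeping is settled, the rest is just transporting the defining sandwich of a $\salg^\nu$-set and invoking homogeneity on the $\salg$-measurable error set $B_+ \setminus B_-$.
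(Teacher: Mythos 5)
Your proof is correct and follows essentially the same route as the paper: both arguments transport the standard description of the completion through the bijection $x \mapsto sx$ and apply the homogeneity identity \eqref{eq:nu-x-homogeneous} to an $\salg$-measurable null set. The only cosmetic difference is that you use the sandwich characterization $B_- \subseteq C \subseteq B_+$ with $\nu(B_+ \setminus B_-) = 0$, whereas the paper phrases the completion via a symmetric difference $B \triangle C$ being $\nu$-null (first checking that scaling preserves $\nu$-null sets); these are equivalent bookkeeping devices and your treatment of the key points (injectivity of the action so that $sB_+ \setminus sB_- = s(B_+ \setminus B_-)$, and measurability of $x \mapsto sx$ for each fixed $s$) matches the paper's.
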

\begin{proof}
  Suppose that $A \subset \bX$ is a $\nu$-null set.  That is, there
  exists $N \in \salg$ such that $A \subseteq N$ and $\nu(N) = 0$.
  For any $s > 0$, $s A \subseteq s N \in \salg$ and $\nu(s N) =
  s^{-\alpha}\nu(N) = 0$ by \eqref{eq:nu-x-homogeneous}, and so $s A$
  is also $\nu$-null.  Now, $B \in \salg^\nu$ if and only if there
  exists $C \in \salg$ such that $B \triangle C$ is $\nu$-null, in
  which case $\nu(B) = \nu(C)$.  Since $(s B) \triangle (s C) = s (B
  \triangle C)$ for $s > 0$ and the latter set is $\nu$-null by the
  above, we see that $s B \in \salg^\nu$ and $\nu(s B)= \nu(s C) =
  s^{-\alpha}\nu(C) = s^{-\alpha}\nu(B)$.
\end{proof}

\begin{remark}
  Note that Lemma~\ref{lemma:extend_homo_completion} implies that the
  map $x \mapsto s x$ is $\salg^\nu / \salg^\nu$-measurable for any
  $s>0$.  It does not say that the map $(x, s) \mapsto s x$ is
  $\salg^\nu \otimes \cB(\bR_{++}) / \salg^\nu$-measurable.
\end{remark}

\begin{notation}
  For $I\subseteq\bR_{++}$ and $B \in \salg$, put
  \begin{displaymath}
    IB:=\bigcup_{t\in I} tB. 
  \end{displaymath}
\end{notation}

The following is the first of a series of related assumptions that
require the existence of a suitably rich family of subsets or 
nonnegative functions on our carrier space.

\begin{assumption}
  \label{assumption:U}
  There exist sets $U_k\in\salg$, $k\in\bN$, such that
  \begin{itemize}
  \item[(i)] if $x\in U_k$, then $tx\in U_k$ for all $t\geq1$ 
  (that is, $[1,\infty) U_k = U_k$);
  \item[(ii)] the sets $V_k:=(0,\infty) U_k$, $k\in\bN$, cover $\bX$
  (that is, $\bigcup_{k \in \bN} V_k = \bX$). 
  \end{itemize}
\end{assumption}

\begin{proposition}
  \label{proposition:renorm}
  Suppose that Assumption~\ref{assumption:U} holds.
  The following are equivalent for a measure $\nu$ on $(\bX, \salg)$.
  \begin{itemize}
  \item[i)]
    The measure $\nu$ is a nontrivial $\alpha$-homogeneous measure with
    \begin{equation}
      \label{Eq:nu-finite}
      \nu(U_k)<\infty,\qquad k\in\bN\,.
    \end{equation}
  \item[ii)] The measure $\nu$ is the push-forward of the measure
    $\pi\otimes\theta_\alpha$ by the map $(x,t)\mapsto tx$, where
    $\pi$ is a probability measure on $\bX$ such that
    \begin{equation}
      \label{eq:pi-Sk}
      \int_0^\infty \pi(tU_k)t^{\alpha-1} \, dt < \infty,\qquad k\in\bN,
    \end{equation}
    and $\theta_\alpha$ is the measure on $\bR_{++}$ given by 
    $\theta_\alpha(dt) := \alpha t^{-(\alpha+1)} \, dt$.
  \item[(iii)] For a probability measure $\pi$ on $\bX$ such that
    \eqref{eq:pi-Sk} holds,
    \begin{equation}
      \label{Eq:nu-rep}
      \nu(B) = \alpha \int_0^\infty \pi(t B) \, t^{\alpha-1} \, dt
    \end{equation}
    for all $B \in \salg$.
  \end{itemize}
\end{proposition}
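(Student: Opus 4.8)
The plan is to prove the cycle $(ii)\Leftrightarrow(iii)$, then $(ii)\Rightarrow(i)$, and finally the substantive implication $(i)\Rightarrow(ii)$. The first two are bookkeeping. For $(ii)\Leftrightarrow(iii)$, writing out the push-forward and using $\ind_B(tx)=\ind_{t^{-1}B}(x)$ (Remark~\ref{remark:scaling-one-s}) together with Tonelli gives, for the push-forward $\nu$ of $\pi\otimes\theta_\alpha$, the identity $\nu(B)=\int_0^\infty \pi(t^{-1}B)\,\theta_\alpha(dt)$; the substitution $t\mapsto t^{-1}$ turns this into \eqref{Eq:nu-rep}, and conversely \eqref{Eq:nu-rep} pins $\nu$ down as exactly that push-forward, so the two conditions coincide (modulo noting that $t\mapsto\pi(tB)$ is measurable, again by Tonelli applied to the measurable action). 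For $(ii)\Rightarrow(i)$: a direct change of variables in \eqref{Eq:nu-rep} shows the right-hand side defines an $\alpha$-homogeneous measure; it is nontrivial since $\nu(\bX)=\theta_\alpha(\bR_{++})=\infty$; and \eqref{Eq:nu-finite} is precisely \eqref{eq:pi-Sk} read through \eqref{Eq:nu-rep}.

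For $(i)\Rightarrow(ii)$ the idea is to manufacture a measurable $1$-homogeneous ``gauge'' piece by piece and read off polar coordinates from it. Fix $k$. Since $U_k=[1,\infty)U_k$, the family $\{sU_k\}_{s>0}$ is nonincreasing in $s$, so $V_k=\bigcup_n n^{-1}U_k\in\salg$ is scale-invariant, and the function $t_k(x):=\inf\{t>0:tx\in U_k\}$ on $V_k$ satisfies $\{t_k<c\}=\bigcup_m (c^{-1}+m^{-1})U_k\in\salg$, hence is measurable, with $t_k(sx)=s^{-1}t_k(x)$. The set $Z_k:=\{x\in V_k:t_k(x)=0\}$ equals $\bigcap_n nU_k$ and is $\nu$-null because $\nu(nU_k)=n^{-\alpha}\nu(U_k)\to 0$. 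Passing to the scale-invariant sets $\tilde V_k:=V_k\setminus\bigcup_{j<k}V_j$, which partition $\bX$, and discarding the $\nu$-null set $Z:=\bigcup_k Z_k$, the function $g_k:=1/t_k$ takes values in $\bR_{++}$ on $\tilde V_k\setminus Z$ and is $1$-homogeneous; hence $\Psi_k(x):=(g_k(x)^{-1}x,\,g_k(x))$ is a measurable bijection of $\tilde V_k\setminus Z$ onto $S_k\times\bR_{++}$, where $S_k:=\{x\in\tilde V_k\setminus Z:g_k(x)=1\}$, with measurable inverse $(y,r)\mapsto ry$ (the restriction of the scaling), and $\Psi_k$ conjugates the scaling on $\bX$ to the action $(y,r)\mapsto(y,sr)$ on the second coordinate.

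Let $\mu_k$ be the push-forward of $\nu|_{\tilde V_k}$ under $\Psi_k$. By the conjugation, $\mu_k$ is $\alpha$-homogeneous in its second coordinate, so $\mu_k(A\times(s,\infty))=s^{-\alpha}\rho_k(A)$ with $\rho_k(A):=\mu_k(A\times(1,\infty))$. Because $\{x\in\tilde V_k:g_k(x)>1\}\subseteq U_k$ (if $tx\in U_k$ for some $t<1$ then $x=(1/t)(tx)\in U_k$), one has $\rho_k(S_k)\le\nu(U_k)<\infty$, so $\mu_k$ is $\sigma$-finite. Since $\theta_\alpha((s,\infty))=s^{-\alpha}$, the measures $\mu_k$ and $\rho_k\otimes\theta_\alpha$ agree on the $\pi$-system $\{A\times(s,\infty):A\in\salg,\ s>0\}$, which generates the product $\sigma$-algebra, whence $\mu_k=\rho_k\otimes\theta_\alpha$ by uniqueness of $\sigma$-finite measures, and therefore $\nu|_{\tilde V_k}$ is the push-forward of $\rho_k\otimes\theta_\alpha$ under $(y,r)\mapsto ry$.

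It remains to glue the $\rho_k$ into a single probability measure. Replacing $g_k$ by $a_kg_k$ for a positive constant $a_k$ rescales $S_k$ and multiplies $\rho_k(S_k)$ by $a_k^\alpha$, while preserving the reconstruction of $\nu|_{\tilde V_k}$ by the same map $(y,r)\mapsto ry$; choosing the $a_k$ so that $\sum_k a_k^\alpha\rho_k(S_k)=1$ -- possible since this sum can be made finite by taking the $a_k$ small and is strictly positive because nontriviality of $\nu$ forces some $\rho_k$ to be nonzero -- the measure $\pi$ obtained by summing the rescaled $\rho_k$ (each supported on the corresponding rescaled $S_k\subseteq\tilde V_k$, so with pairwise disjoint supports) is a probability measure on $\bX$, its push-forward with $\theta_\alpha$ equals $\sum_k\nu|_{\tilde V_k}=\nu$, and \eqref{eq:pi-Sk} follows from \eqref{Eq:nu-finite} by the already-established $(ii)\Leftrightarrow(iii)$. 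The main obstacles are: the measurability of $t_k$ together with the $\nu$-nullity of its zero set (both extracted from the upward scale-invariance of $U_k$); the ``radial rigidity'' step forcing $\mu_k$ to be a product with $\theta_\alpha$ on the radial factor, where the needed $\sigma$-finiteness relies on $\nu(U_k)<\infty$; and the bookkeeping required to turn the $\sigma$-finite angular measures into a genuine probability measure without disturbing the reconstruction of $\nu$.
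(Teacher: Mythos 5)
Your proposal is correct and follows essentially the same route as the paper: your gauge $g_k=1/t_k$ is exactly the paper's function $\tau$ built from $\bar U_k=\bigcap_{s<1}sU_k$, and the subsequent steps (discarding the scale-invariant $\nu$-null set where the gauge degenerates, the bimeasurable bijection onto $S_k\times\bR_{++}$, identifying the radial factor as $\theta_\alpha$ via homogeneity and a $\pi$-system/uniqueness argument, and rescaling the angular pieces to glue them into a probability measure) mirror the paper's proof, with only cosmetic differences in which easy implications are checked directly.
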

	
\begin{proof}
  Statement (iii) is just a restatement of statement (ii), so it
  suffices to show that (i) $\Longrightarrow$ (ii) and (iii)
  $\Longrightarrow$ (i).

  \textsl{(i) $\Longrightarrow$ (ii).}  Fix $k\in\bN$. Note that
  $V_k=\bigcup_{n\in\bN} 2^{-n} U_k$, so that $V_k\in\salg$.  Put
  \begin{displaymath}
    \bar U_k:=\bigcap_{t<1} t U_k
    = \bigcap_{n \in \bN} (1 - 2^{-n}) U_k \in\salg,
  \end{displaymath}
  and observe that $V_k=(0,\infty) \bar U_k$.  The $\alpha$-homogeneity of
  $\nu$ and \eqref{Eq:nu-finite} yield that
  \begin{displaymath}
    \nu(\bar U_k)= \inf_{t<1} t^{-\alpha}\nu(U_k)=\nu(U_k)<\infty.
  \end{displaymath}
  For $x\in V_k$, set
  \begin{displaymath}
    \tau(x):=\sup\{t>0:\; x\in t\bar U_k\} \in (0,\infty]. 
  \end{displaymath}
  Since
  \begin{displaymath}
    \{x\in V_k:\; \tau(x)\geq t\}=\bigcap_{s<t} s\bar U_k
    = t \bigcap_{s<1}s\bar U_k = t\bar U_k,
  \end{displaymath}
  the function $\tau: V_k\mapsto (0,\infty]$ is
  $\salg$-measurable. Clearly, $\tau(sx)=s\tau(x)$ for all $s>0$.
  Also, setting
  \begin{displaymath}
    N_k := \{x\in V_k:\; \tau(x)=\infty\} = \bigcap_{t > 0} t \bar U_k,
  \end{displaymath}
  we have $s N_k = N_k$ for all $s > 0$ and
  \begin{displaymath}
    \nu(N_k)
    =\inf_{t>0} t^{-\alpha}\nu(\bar U_k)=0. 
  \end{displaymath}
  Define 
  \begin{displaymath}
    W_k:=\{x\in V_k:\; \tau(x)=1\} 
    = \bar U_k \setminus \left(\bigcup_{t > 1} t \bar U_k\right)
    \subseteq V_k \setminus N_k.
  \end{displaymath}
  Note that
  \begin{equation}
    \label{barU_k_minus_N_k_is_product}
    \bar U_k \setminus N_k = [1,\infty) W_k
  \end{equation}
  and
  \begin{equation}
    \label{V_k_minus_N_k_is_product}
    V_k \setminus N_k = (0,\infty) W_k.
  \end{equation}
  
  Set $V'_1:=V_1$ and $V'_i:= (V_i\cap (V_1\cup\cdots\cup V_{i-1})^c)$
  for $i\geq 2$. Then put $V_j'' = V_j' \setminus N_j$, $W_j''
  :=W_j\cap V_j''$ and $\bar U_j'':=\bar U_j \cap V_j''$ for
  $j\in\bN$. The sets $\{V_k''\}_{k\in\bN}$ are disjoint, $\bX
  \setminus \bigcup_{k \in \bN} V_k'' \subseteq \bigcup_{k \in \bN}
  N_k$ is a $\nu$-null set, and $\bar U_k''=\{tx:\; t\geq1,\, x \in
  W_k''\}$ for all $k\in\bN$. Therefore, it is possible to assume
  without loss of generality that all of the sets $N_k$ are empty, the
  sets $\{\bar U_k\}_{k \in \bN}$ are pairwise disjoint, the sets
  $\{V_k\}_{k\in\bN}$ form a measurable partition of $\bX$, and that
  $\tau(x)$ is uniquely defined and finite for all $x \in \bX$.

  The sets $\{x\in V_k:\; \tau(x)=t\}=t W_k$ are disjoint for
  different $t>0$ and their union is $V_k$.  The map $x\mapsto
  (\tau(x)^{-1}x,\tau(x))$ is therefore a well-defined bimeasurable
  bijection between $V_k$ and $W_k\times\bR_{++}$.  Let $\tilde \nu$
  be the push-forward of $\nu$ by the map $x \mapsto (\tau(x)^{-1} x,
  \tau(x))$ and define a measure $\rho_k$ on $W_k$ by $\rho_k(A) =
  \tilde \nu(A \times [1,\infty))$. Note that $\rho_k$ is a finite
  measure, since
  \[
  \rho_k(W_k) 
  = \tilde \nu(W_k \times [1,\infty))=\nu(\bar U_k)<\infty
  \]
  by \eqref{Eq:nu-finite}.  The $\alpha$-homogeneity of $\nu$ is
  equivalent to the scaling property $\tilde \nu(A \times s B) =
  s^{-\alpha} \tilde \nu(A \times B)$ for $s>0$, measurable $A
  \subseteq W_k$, and Borel $B \subseteq \bR_{++}$.  Thus, if we let
  $\theta_\alpha$ be the measure on $\bR_{++}$ given by
  $\theta_\alpha(dt) = \alpha t^{-(\alpha+1)} \, dt$, then
  \[
  \begin{split}
    \tilde \nu(A \times [b,\infty)) 
    & = \tilde \nu(A \times b [1,\infty)) \\
    & = b^{-\alpha} \tilde \nu(A \times [1,\infty)) \\
    & = \rho_k(A) \times \theta_\alpha([b,\infty)) \\
  \end{split}
  \]
  for $A \subseteq W_k$.  The restriction of $\tilde \nu$
  to $W_k \times \bR_{++}$ is therefore $\rho_k \otimes \theta_\alpha$ and
  hence the restriction of $\nu$ to $V_k$ is the push-forward of
  $\rho_k \otimes \theta_\alpha$ by the map $(y,t) \mapsto ty$.

  We can think of $\rho_k$ as a measure on all of $V_k$.  For $c_k
  \in \bR_{++}$, let $\eta_k$ be the measure on $V_k$ that assigns
  all of its mass to the set $c_k W_k$ and is given by $\eta_k(A) =
  c_k^\alpha \rho_k(c_k^{-1} A)$.  We have
  \[
  \begin{split}
    (\eta_k \otimes \theta_\alpha)(\{(y,t) : ty \in B\})
    & = \int \eta_k(t^{-1} B)  \alpha t^{-(\alpha+1)} \, dt \\
    & = \int c_k^\alpha \rho_k(c_k^{-1} t^{-1} B) \alpha t^{-(\alpha+1)} \, dt \\
    & = \int \rho_k(s^{-1} B) \alpha s^{-(\alpha+1)} \, ds \\
    & =  (\rho_k \otimes \theta_\alpha)(\{(y,t) : ty \in B\}) \\
    & = \nu(B) \\
  \end{split}
  \]
  for measurable $B\subset V_k$, 
  and so $\eta_k$ is a finite measure with total mass 
  $c_k^\alpha \rho_k(W_k)$ that has the property that
  the push-forward of $\eta_k \otimes \theta_\alpha$ by
  the map $(y,t) \mapsto ty$ is the restriction of
  $\nu$ to $V_k$.
  
  We can regard $\eta_k$ as being a finite measure on all of $\bX$
  and, by choosing the constants $c_k$, $k \in \bN$, appropriately we
  can arrange for $\pi := \sum_{k \in \bN} \eta_k$ to be a probability
  measure such that \eqref{Eq:nu-rep} holds, and this also implies
  \eqref{eq:pi-Sk}.

  \textsl{(iii) $\Longrightarrow$ (i).}  It is easy to see that $\nu$
  given by \eqref{Eq:nu-rep} is $\alpha$-homogeneous, and
  \eqref{Eq:nu-finite} follows from \eqref{eq:pi-Sk} and
  \eqref{Eq:nu-rep}.
\end{proof}

\begin{remark}
  A key observation in the proof of
  Proposition~\ref{proposition:renorm} is that it is possible to
  define measurable sets $W_k$ such that
  \eqref{barU_k_minus_N_k_is_product} and
  \eqref{V_k_minus_N_k_is_product} hold, and if $x \in W_k$, then $t x
  \notin W_k$ for any $t \ne 1$.  We would like to reverse the
  construction in Proposition~\ref{proposition:renorm} by starting
  with a suitable collection of sets $\{W_k\}_{k\in\bN}$ with this
  last property and conclude that if we put $\bar U_k := [1,\infty)
  W_k$, then $\bar U_k \in \salg$ and if we construct sets from each
  of the $\bar U_k$ in the manner that the $W_k$ are constructed in
  the proof, then we recover the $W_k$.  There is, however, a slightly
  delicate point here: if $B \in \salg$, then it is not necessarily
  the case that $[1,\infty) B = \bigcup_{t \ge 1} t B = \{t x: t \ge
  1, \, x \in B\} \in \salg$.
\end{remark}

\begin{lemma}
  \label{lemma:projection}
  Let $\nu$ be a $\sigma$-finite $\alpha$-homogeneous measure on
  $(\bX, \salg)$ and suppose that $\salg$ is $\nu$-complete.
  \begin{itemize}
  \item[i)] For $B \in \salg$ and a Borel set $I \subseteq \bR_{++}$,
    the set $I B$ also belongs to $\salg$.
  \item[ii)] If $\nu([t,u) B) < \infty$ for $B \in \salg$ and $0 < t <
    u < \infty$, then $\nu([s,\infty) B) < \infty$ for all $s > 0$.
\end{itemize}
\end{lemma}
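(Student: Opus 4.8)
The plan is to dispatch part i) with the measurable projection theorem and then deduce part ii) from part i) together with a geometric‑series estimate powered by $\alpha$-homogeneity. There is no genuine difficulty in part ii) once part i) is in hand, so the crux of the whole lemma is part i), and within part i) the only substantive input is the projection theorem; everything else is bookkeeping with the action axioms.

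For part i) I would first exploit that a scaling is an \emph{action}, hence invertible: the map $\psi\colon\bX\times\bR_{++}\to\bX$, $\psi(y,t):=t^{-1}y$, is the composition of $(y,t)\mapsto(y,t^{-1})$ — measurable, since $t\mapsto t^{-1}$ is Borel on $\bR_{++}$ — with the pairing $(x,s)\mapsto sx$, which is $\salg\otimes\cB(\bR_{++})/\salg$-measurable by definition; therefore $\psi$ is $\salg\otimes\cB(\bR_{++})/\salg$-measurable. The key observation is then the identity
\begin{displaymath}
  I B = \bigcup_{t\in I} tB = \mathrm{pr}_{\bX}\bigl(\psi^{-1}(B)\cap(\bX\times I)\bigr),
\end{displaymath}
where $\mathrm{pr}_{\bX}$ is the coordinate projection: indeed $y\in IB$ iff $y=tx$ for some $t\in I$, $x\in B$, iff $t^{-1}y\in B$ for some $t\in I$, iff $(y,t)\in\psi^{-1}(B)$ for some $t\in I$. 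Since $B\in\salg$ and $I\in\cB(\bR_{++})$, the set $\psi^{-1}(B)\cap(\bX\times I)$ belongs to $\salg\otimes\cB(\bR_{++})$. Because $\bR_{++}$ is a Polish space, $\nu$ is $\sigma$-finite, and $\salg$ is $\nu$-complete, the measurable projection theorem applies and gives $IB=\mathrm{pr}_{\bX}\bigl(\psi^{-1}(B)\cap(\bX\times I)\bigr)\in\salg$. (If one prefers to cite the finite-measure form of the theorem, decompose $\bX$ into countably many sets of finite $\nu$-measure and take the union.) I expect this invocation of measurable projection to be the only place real machinery is needed.

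For part ii) I would use part i) to know that $JB\in\salg$ for every Borel $J\subseteq\bR_{++}$, so all the $\nu$-values below are defined. Put $r:=u/t>1$. For arbitrary $s>0$ write $[s,\infty)=\bigcup_{n\ge 0}[sr^n,sr^{n+1})$, and note the elementary set identities $[sr^n,sr^{n+1})=(sr^n/t)\,[t,u)$ (using $u=rt$) and $(\lambda J)B=\lambda(JB)$ for $\lambda>0$, both immediate from the action axioms. Hence $[sr^n,sr^{n+1})B=(sr^n/t)\bigl([t,u)B\bigr)$, and $\alpha$-homogeneity yields $\nu\bigl([sr^n,sr^{n+1})B\bigr)=(sr^n/t)^{-\alpha}\,\nu\bigl([t,u)B\bigr)$. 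By countable subadditivity,
\begin{displaymath}
  \nu\bigl([s,\infty)B\bigr)\le\sum_{n\ge 0}(sr^n/t)^{-\alpha}\,\nu\bigl([t,u)B\bigr)
  =\Bigl(\frac{t}{s}\Bigr)^{\alpha}\,\frac{\nu\bigl([t,u)B\bigr)}{1-r^{-\alpha}}<\infty,
\end{displaymath}
since $\alpha>0$, $r>1$, and $\nu([t,u)B)<\infty$ by hypothesis. As $s>0$ was arbitrary, this proves ii). The only care required is to apply the scaling identities $(\lambda J)B=\lambda(JB)$ and the measurability statement of part i) correctly; there is no further obstacle.
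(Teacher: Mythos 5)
Your proposal is correct and follows essentially the same route as the paper: part i) is the identity $IB=\Pi\bigl(\{(y,u):\,uy\in B\}\bigr)$ (you phrase it via $\psi(y,t)=t^{-1}y$ and $\bX\times I$, the paper via the original pairing and $\bX\times I^{-1}$, which is the same thing after the substitution $u=t^{-1}$) combined with the measurable projection theorem under $\sigma$-finiteness and $\nu$-completeness; part ii) is the same geometric decomposition of $[s,\infty)$ into scaled copies of $[t,u)$, $\alpha$-homogeneity applied to $[t,u)B$, and countable subadditivity with a convergent geometric series.
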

\begin{proof}
  For part (i), observe that
  \[
  \begin{split}
    I B    
    & =
    \{y \in \bX : \; y = t x, \; \text{for some} \; t \in I \; \text{and} \; x \in B\} \\  
    & =
    \{y \in \bX : \; t^{-1} y = x, \; \text{for some} \; t \in I \; \text{and} \; x \in B\} \\ 
    & =
    \{y \in \bX : \; u y = x, \; \text{for some} \; u \in I^{-1} \; \text{and} \; x \in B\} \\ 
    & =
    \{y \in \bX : \; u y \in B, \; \text{for some} \; u \in I^{-1}\} \\
    & =
    \Pi(\{(y,u) \in \bX \times I^{-1}: \; u y\in B\}), \\
  \end{split}
  \]
  where $\Pi: \bX \times \bR_{++} \to \bX$ is the projection map
  defined by $\Pi((x,t)) = x$.  The map $t \mapsto t^{-1}$ from
  $\bR_{++}$ to itself is a $\cB(\bR_{++})$-measurable bijection and
  it is its own inverse, so the set $I^{-1}$ is
  $\cB(\bR_{++})$-measurable.  It follows that the set $\{(y,u) \in
  \bX \times I^{-1} :\; u y\in B\}$ is $\salg \otimes
  \cB(\bR_{++})$-measurable.  The projection theorem (see,
  for example, \cite[Th.~12.3.4]{MR752692} or \cite[Section~III.44]{MR521810})
  and the $\nu$-completeness of $\salg$ yield that $I B \in \salg$.

  For part (ii), note that 
  \[
  \begin{split}
    [s,\infty) B 
    & = \left(\bigcup_{n \geq0} \frac{s}{t} \left(\frac{u}{t}\right)^n [t,u)\right) B \\
    & \subseteq \bigcup_{n \geq0} \frac{s}{t} \left(\frac{u}{t}\right)^n [t,u) B. \\
  \end{split}
  \]
  By the $\alpha$-homogeneity of $\nu$
  \[
  \nu\left(\frac{s}{t} \left(\frac{u}{t}\right)^n [t,u) B\right)
  =
  \left(\frac{s}{t}\right)^{-\alpha} \left(\frac{u}{t}\right)^{-n \alpha} \nu([t,u) B),
  \]
  and the result follows from the subadditivity of $\nu$ and the
  summability of the relevant geometric series.
\end{proof}

It is possible to obtain the conclusion of
Proposition~\ref{proposition:renorm} under another assumption.

\begin{assumption}
  \label{assumption:X}
  There exists $\bS\in\salg$ such that 
  \begin{displaymath}
    \bX=\{tx:\; t\in\bR_{++}, \, x\in\bS\},
  \end{displaymath}
  and $\{x,tx\}\subset\bS$ for some $x\in\bX$ and $t>0$ implies that
  $t=1$. 
\end{assumption}

\begin{definition}
  The {\em orbits} of the action of the group $\bR_{++}$ on $\bX$ are
  the sets of the form $\{t x : t \in \bR_{++}\}$ for some $x \in
  \bX$.  The orbits are the equivalence classes for the equivalence
  relation on $\bX$ where we declare $x$ and $y$ to be equivalent if
  $y = t x$ for some $t \in \bR_{++}$.  Assumption~\ref{assumption:X}
  says that the measurable set $\bS$ intersects each equivalence class
  in exactly one point. Such a set is called a \emph{transversal}.
\end{definition}

The following result is immediate from Lemma~\ref{lemma:projection} and the
proof of Proposition~\ref{proposition:renorm}.

\begin{proposition}
  \label{proposition:sphere}
  Let $\nu$ be a $\sigma$-finite, $\alpha$-homogeneous measure on $\bX$.
  \begin{itemize}
  \item[i)] Suppose that Assumption~\ref{assumption:U} and the
    finiteness condition \eqref{Eq:nu-finite} hold.  Set
    \[
    W_k 
    := \left(\bigcap_{s<1} s U_k\right) 
   \setminus \left(\bigcup_{t > 1} t \left(\bigcap_{s<1} s U_k\right)\right).
    \]
    Then, by possibly replacing $\bX$ with a set $\bX' \in \salg$ such
    that $t \bX' = \bX'$ for all $t >0$ and $\nu(\bX \setminus
    \bX')=0$, Assumption~\ref{assumption:X} holds for $\bS :=
    \bigcup_{k \in \bN} W_k$.  Moreover, for any nonempty intervals
    $I_k := [a_k, b_k) \subset \bR_{++}$, $k \in \bN$, it is the
    case that $I_k W_k \in \salg$ and $\nu(I_k W_k)<\infty$ for
    $k\in\bN$.
  \item[ii)] Suppose that that Assumption~\ref{assumption:X} holds and
    there is a pairwise disjoint family of measurable sets
    $\{W_k\}_{k\in\bN}$ such that $\bS = \bigcup_{k \in \bN} W_k$ and
    a family of nonempty intervals $I_k := [a_k, b_k) \subset
    \bR_{++}$, $k \in \bN$, such that $\nu(I_k W_k)<\infty$ for
    all $k\in\bN$, where $I_k W_k$ is guaranteed to belong to
    $\salg^\nu$ for all $k \in \bN$.  Then
    Assumption~\ref{assumption:U} and the finiteness condition
    \eqref{Eq:nu-finite} hold with $(\bX,\salg)$ replaced by $(\bX,
    \salg^\nu)$ and $U_k := [1,\infty)W_k$ for $k\in\bN$.
  \end{itemize}
\end{proposition}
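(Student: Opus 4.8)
The plan is to derive both parts by unpacking the construction inside the proof of Proposition~\ref{proposition:renorm} and invoking Lemma~\ref{lemma:projection}; no genuinely new argument is required, so the task is one of matching notation and pinning down measurability.

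For part (i) I would begin by recording, straight from the proof of Proposition~\ref{proposition:renorm}, that $\bar U_k := \bigcap_{s<1} s U_k = \bigcap_{n \in \bN} (1 - 2^{-n}) U_k \in \salg$, that $\nu(\bar U_k) = \nu(U_k) < \infty$, and that the set written in the statement,
\[
W_k = \bar U_k \setminus \Bigl(\bigcup_{t > 1} t \bar U_k\Bigr) = \{x \in V_k : \tau(x) = 1\},
\]
where $\tau(x) = \sup\{t > 0 : x \in t \bar U_k\}$, is exactly the set denoted $W_k$ in that proof. I would then invoke the reduction carried out there --- discard the scale-invariant $\nu$-null sets $N_k = \bigcap_{t>0} t\bar U_k$, disjointify the $V_k$, and pass to the corresponding pieces $W_k \cap V_k''$ --- which is what the passage to a set $\bX' \in \salg$ with $t\bX' = \bX'$ and $\nu(\bX \setminus \bX') = 0$ is shorthand for (concretely $\bX' = \bX \setminus \bigcup_k N_k$, and $W_k$ is then read as $W_k \cap V_k''$). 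After this reduction the $V_k$ form a scale-invariant measurable partition of $\bX'$ with $V_k = (0,\infty) W_k$, so any two points of $\bigcup_k W_k$ on a common orbit lie in the same $V_k$, hence in the same $W_k$, and there the identities $\tau(sx) = s\tau(x)$ and $\tau \equiv 1$ on $W_k$ force the scaling factor to equal $1$; together with $(0,\infty)\bigcup_k W_k = \bigcup_k V_k = \bX'$ this gives Assumption~\ref{assumption:X} for $\bS = \bigcup_k W_k$. For the remaining claim I would use the bimeasurable bijection $x \mapsto (\tau(x)^{-1} x, \tau(x))$ between $V_k$ and $W_k \times \bR_{++}$ supplied by the same proof: under it $I_k W_k$ corresponds to the measurable rectangle $W_k \times I_k$, so $I_k W_k \in \salg$, and since the transported measure restricts to $\rho_k \otimes \theta_\alpha$ with $\rho_k(W_k) = \nu(\bar U_k) < \infty$, we get $\nu(I_k W_k) = \nu(\bar U_k)\,\theta_\alpha(I_k) < \infty$, every nonempty interval $[a_k, b_k)$ having finite $\theta_\alpha$-measure $a_k^{-\alpha} - b_k^{-\alpha}$.

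For part (ii) I would set $U_k := [1,\infty) W_k$ and verify the three requirements directly. Measurability $U_k \in \salg^\nu$ is Lemma~\ref{lemma:projection}(i) applied with the Borel set $I = [1,\infty)$, working in $(\bX, \salg^\nu)$; this is legitimate because $\salg^\nu$ is $\nu$-complete and, by Lemma~\ref{lemma:extend_homo_completion}, $\nu$ remains $\sigma$-finite and $\alpha$-homogeneous on it. Condition (i) of Assumption~\ref{assumption:U}, namely $[1,\infty) U_k = U_k$, is immediate from $[1,\infty)\cdot[1,\infty) = [1,\infty)$ and the group-action property of the scaling. Condition (ii) holds because $V_k := (0,\infty) U_k = (0,\infty) W_k$ and $\bigcup_k V_k = (0,\infty)\bigcup_k W_k = (0,\infty)\bS = \bX$ by Assumption~\ref{assumption:X}. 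Finally, for $\nu(U_k) < \infty$ I would apply Lemma~\ref{lemma:projection}(ii) with $B = W_k$: the hypothesis $\nu([a_k,b_k) W_k) < \infty$ supplies an interval $[t,u)$ with $0 < t < u < \infty$ on which $\nu([t,u) W_k) < \infty$ (take $[t,u) = [a_k,b_k)$ when $b_k < \infty$, and $[t,u) = [a_k, a_k+1) \subseteq I_k$ otherwise), and the lemma then yields $\nu([s,\infty) W_k) < \infty$ for all $s > 0$; the case $s = 1$ is $\nu(U_k) < \infty$.

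The step I expect to be the real, if modest, obstacle is the transversal property in part (i): the set $W_k$ as literally written is built from the $U_k$ before the $V_k$ are disjointified, and without that disjointification $\bigcup_k W_k$ need not meet each orbit only once --- for $\bX = \bR_{++}$ with ordinary multiplication, $U_1 = [1,\infty)$ and $U_2 = [2,\infty)$, it comes out as $\{1,2\}$. The care needed is therefore to be explicit that the passage to $\bX'$ stands for precisely the reduction used in the proof of Proposition~\ref{proposition:renorm}, after which the transversal property together with the measurability and finiteness of the sets $I_k W_k$ all follow mechanically from that proof and from Lemma~\ref{lemma:projection}.
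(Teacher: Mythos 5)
Your proof is correct and follows essentially the paper's own route: the paper declares the proposition immediate from Lemma~\ref{lemma:projection} and the proof of Proposition~\ref{proposition:renorm}, and you spell out exactly those ingredients (the sets $\bar U_k$, the function $\tau$, the null sets $N_k$, the bimeasurable bijection with $W_k\times\bR_{++}$ for part (i), and the two parts of Lemma~\ref{lemma:projection} applied on the completion for part (ii)). Your remark that the literal $W_k$ must be read as the disjointified sets $W_k\cap V_k''$ from that proof --- your $\bR_{++}$ example with $U_1=[1,\infty)$, $U_2=[2,\infty)$ correctly shows $\bS$ otherwise fails to be a transversal --- is apt and consistent with the ``without loss of generality'' reduction the paper's construction relies on.
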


\begin{definition}
  Recall that a {\em Borel space} is a measurable space that is Borel
  isomorphic to a Borel subset of a Polish space equipped with the
  trace of the Borel $\sigma$-algebra on the Polish space.
\end{definition}

\begin{lemma}
  \label{lemma:pair-of-maps}
  Suppose that $\nu$ is a $\sigma$-finite $\alpha$-homogeneous measure
  on $(\bX,\salg)$ and that Assumption~\ref{assumption:X} is
  satisfied.  For $x \in \bX$, define $\tau(x) \in \bR_{++}$ by the
  requirement that $\tau(x)^{-1}x\in\bS$.  Then the following hold.
  \begin{itemize}
  \item[i)] The maps $(x,t)\mapsto tx$ from $\bS\times\bR_{++}$ to
    $\bX$ and $x\mapsto(\tau(x)^{-1}x,\tau(x))$ from $\bX$ to
    $\bS\times\bR_{++}$ are mutually inverse.
  \item[ii)] If $\salg$ is $\nu$-complete, then
    $x\mapsto(\tau(x)^{-1}x,\tau(x))$ is $\salg / \salg_{| \bS}
    \otimes \cB(\bR_{++})$-measurable, where $\salg_{| \bS}$ is the
    $\sigma$-algebra induced by $\salg$ on $\bS$.
  \item[iii)] If $(\bX,\salg)$ is a Borel space, then
    $x\mapsto(\tau(x)^{-1}x,\tau(x))$ is $\salg / \salg_{| \bS}
    \otimes \cB(\bR_{++})$-measurable.
  \end{itemize}
\end{lemma}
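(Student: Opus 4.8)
The plan is to get part~(i) by a direct computation exploiting the uniqueness of the ``polar representation'' provided by Assumption~\ref{assumption:X}, and then to establish the measurability of the inverse map in parts~(ii) and~(iii) by two independent routes: the measurable projection theorem (already packaged as Lemma~\ref{lemma:projection}) when $\salg$ is $\nu$-complete, and descriptive set theory when $(\bX,\salg)$ is a Borel space.

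First I would record the consequence of Assumption~\ref{assumption:X} that every $x\in\bX$ admits a \emph{unique} representation $x=ty$ with $t\in\bR_{++}$ and $y\in\bS$: existence is the first clause of the assumption, and if $ty=t'y'$ with $y,y'\in\bS$ then $y'=(t'^{-1}t)y$ puts $y$ and $y'$ on a common orbit, so the transversal property forces $t'^{-1}t=1$. Hence $\tau(x)$ is well defined and $\tau(x)^{-1}x\in\bS$. Writing $\Psi(y,t):=ty$ for the map of the statement's first bullet and abbreviating $\Phi(x):=(\tau(x)^{-1}x,\tau(x))$, one checks $\Psi(\Phi(x))=\tau(x)(\tau(x)^{-1}x)=(\tau(x)\tau(x)^{-1})x=x$ using the action axioms, and, since $t^{-1}(ty)=y\in\bS$ exhibits the polar representation of $ty$, $\Phi(\Psi(y,t))=(y,t)$; this proves~(i).

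For part~(ii) it suffices to check that each coordinate of $\Phi$ is measurable, and the uniqueness of the polar representation yields the clean identities $\{x:\tau(x)\in I\}=I\bS$ for Borel $I\subseteq\bR_{++}$ and $\{x:\tau(x)^{-1}x\in A\}=(0,\infty)A$ for $A\in\salg$ with $A\subseteq\bS$. Since $\bS,A\in\salg$, the measure $\nu$ is $\sigma$-finite and $\alpha$-homogeneous, and $\salg$ is $\nu$-complete, Lemma~\ref{lemma:projection}(i) gives $I\bS\in\salg$ and $(0,\infty)A\in\salg$, so $\tau$ is $\salg/\cB(\bR_{++})$-measurable and $x\mapsto\tau(x)^{-1}x$ is $\salg/\salg_{| \bS}$-measurable, hence $\Phi$ is $\salg/\salg_{| \bS}\otimes\cB(\bR_{++})$-measurable. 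For part~(iii) I would instead note that $\Psi$, being the restriction to $\bS\times\bR_{++}$ of the scaling map, is $\salg_{| \bS}\otimes\cB(\bR_{++})/\salg$-measurable, and it is a bijection by part~(i); since $\bS\in\salg$ and $(\bX,\salg)$ is a Borel space, both $(\bS,\salg_{| \bS})$ and $(\bS\times\bR_{++},\salg_{| \bS}\otimes\cB(\bR_{++}))$ are Borel spaces, so the Lusin--Souslin theorem on measurable bijections of standard Borel spaces makes $\Phi=\Psi^{-1}$ measurable.

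The computation in~(i) and the set identities in~(ii) are routine; the substantive inputs are the two imported facts, Lemma~\ref{lemma:projection}(i) --- which itself rests on the measurable projection theorem --- for part~(ii), and the Lusin--Souslin isomorphism theorem for part~(iii). The one point I would treat carefully is verifying that a measurable subset of a Borel space, and a product of two Borel spaces, is again a Borel space, so that Lusin--Souslin genuinely applies to $\Psi$.
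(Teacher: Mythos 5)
Your proposal is correct and follows essentially the same route as the paper: part (i) by the uniqueness of the polar representation, part (ii) via Lemma~\ref{lemma:projection} (the measurable projection theorem), and part (iii) by the fact that a measurable bijection between Borel spaces has a measurable inverse (your Lusin--Souslin citation matches the paper's appeal to Kuratowski's theorem). The only cosmetic difference is that in (ii) you verify both coordinates directly through Lemma~\ref{lemma:projection}, whereas the paper reduces everything to the measurability of $\tau$ using the identity $\{x : \tau(x)\ge t\}=[t,\infty)\bS$ and the joint measurability of the scaling.
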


\begin{proof}
  Part (i) is clear from the definition of $\tau$.  Turning to part
  (ii), it suffices to show that the map $\tau$ is $\salg$-measurable,
  but this follows from Lemma~\ref{lemma:projection} and the fact that
  $\{x \in \bX : \tau(x) \ge t\} = [t,\infty) \bS$.  For part (iii),
  first note that if $\bX$ is a Borel space, then so is $\bS$ equipped
  with the trace $\sigma$-algebra and hence also $\bS \times
  \bR_{++}$.  Now apply the result of Kuratowski (see,
	for example, \cite[Section I.3]{par67}) that a measurable bijection between
  two Borel spaces has a measurable inverse.
\end{proof}

Proposition~\ref{proposition:renorm} required
Assumption~\ref{assumption:U} concerning the existence of suitable
countable family of subsets of the carrier space $\bX$ and the
hypothesis \eqref{Eq:nu-finite} that these sets all have finite $\nu$
mass.  The following result replaces such requirements by the
finiteness of integrals $\int h \, d\nu$ for a countable family of
measurable functions $h:\bX\mapsto\bR_+$, $n\in\bN$.

\begin{theorem}
  \label{thr:general-polar}
  Suppose that Assumption~\ref{assumption:X} holds.  Let $\cH$ be a
  countable family of measurable functions $h:\bX\mapsto\bR_+$ such
  that for all $h \in \cH$ and $x \in \bX$ the function $ t \mapsto
  h(tx)$ is right-continuous (or left-continuous) on $\bR_{++}$.

  A $\sigma$-finite measure $\nu$ on $\bX$ is $\alpha$-homogeneous and
  satisfies
  \begin{equation}
    \label{eq:nu-h-integral}
    \int_\bX h(x) \, \nu(dx) <\infty,\qquad h\in\cH,
  \end{equation}
  and 
  \begin{equation}
    \label{Eq:nu-support-h}
    \nu\left(\bigcap_{h\in\cH} \{x\in\bX:\; h(x)=0\}\right)=0
  \end{equation}
  if and only if there exists a probability measure $\pi$ on $\bX$
  such that \eqref{Eq:nu-rep} holds, 
  \begin{equation}
    \label{Eq:levy-condition-pi-general}
    \int_{\bX}\int_0^\infty h(tx) t^{-(\alpha+1)} \, dt \, \pi(dx) <\infty,
    \qquad h\in\cH,
  \end{equation}
  and 
  \begin{displaymath}
    \pi\left((0,\infty) \bigcap_{h \in \cH} \{x\in\bX:\; h(x)=0\}\right)=0. 
  \end{displaymath}
\end{theorem}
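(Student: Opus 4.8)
The plan is to reduce Theorem~\ref{thr:general-polar} to Proposition~\ref{proposition:renorm} (together with Lemma~\ref{lemma:projection} and Lemma~\ref{lemma:pair-of-maps}) by manufacturing, from the countable family $\cH$, a countable family of sets $U_k$ satisfying Assumption~\ref{assumption:U}. The natural candidates are superlevel sets: for $h \in \cH$ and rational $q > 0$ set
\begin{displaymath}
  U_{h,q} := \{x \in \bX : h(sx) \ge q \text{ for all } s \ge 1\}
  = \bigcap_{s \ge 1} \{x : h(sx) \ge q\}.
\end{displaymath}
The one-sided continuity hypothesis on $t \mapsto h(tx)$ is exactly what makes this intersection reduce to a countable one (over $s$ ranging in a countable dense set together with, in the right-continuous case, rational $s \ge 1$), hence $U_{h,q} \in \salg$; it also gives the measurability of $(x,t) \mapsto h(tx)$ needed to apply Fubini below. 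By construction $[1,\infty) U_{h,q} = U_{h,q}$, so condition (i) of Assumption~\ref{assumption:U} holds. For condition (ii), note that if $h(x) = c > 0$ for some $h \in \cH$, then, because $t \mapsto h(tx)$ is one-sided continuous, $x$ lies in some $(0,\infty) U_{h,q}$ for $q$ rational and slightly smaller than the liminf of $h(\cdot x)$ near a point where it is positive --- more carefully, one picks $s_0$ and $q$ so that $h(sx) \ge q$ on a neighborhood of $s_0$ and then $s_0 x \in U_{h,q}$, hence $x \in (0,\infty)U_{h,q}$. The assumption~\eqref{Eq:nu-support-h} that $\nu$ charges no point where all $h$ vanish then means that $\bigcup_{h,q} V_{h,q}$ covers $\bX$ up to a $\nu$-null set, and one absorbs that null set as in Proposition~\ref{proposition:renorm} (or via the $\bX'$ device of Proposition~\ref{proposition:sphere}).

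Next I would check the finiteness condition \eqref{Eq:nu-finite} for these $U_{h,q}$. On $U_{h,q}$ we have $h \ge q$ pointwise (take $s = 1$), so $q\,\nu(U_{h,q}) \le \int_{U_{h,q}} h \, d\nu \le \int_\bX h\, d\nu < \infty$ by \eqref{eq:nu-h-integral}. Thus $\nu(U_{h,q}) < \infty$. With Assumption~\ref{assumption:U} and \eqref{Eq:nu-finite} in hand, Proposition~\ref{proposition:renorm} yields a probability measure $\pi$ with $\nu(B) = \alpha \int_0^\infty \pi(tB) t^{\alpha - 1}\, dt$, which is \eqref{Eq:nu-rep}. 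It remains to translate the hypotheses \eqref{eq:nu-h-integral} and \eqref{Eq:nu-support-h} into the conclusions \eqref{Eq:levy-condition-pi-general} and the statement $\pi((0,\infty)\bigcap_h\{h = 0\}) = 0$, and conversely. For the first, apply \eqref{Eq:nu-rep} and Tonelli: for $h \in \cH$,
\begin{displaymath}
  \int_\bX h(y)\, \nu(dy)
  = \alpha \int_\bX \int_0^\infty h(tx)\, t^{\alpha - 1}\, dt\, \pi(dx),
\end{displaymath}
where one must first verify that $y \mapsto h(y)$ pushed through the polar map equals $(x,t) \mapsto h(tx)$, i.e.\ that $\nu$ is genuinely the push-forward of $\pi \otimes \theta_\alpha$ under $(x,t) \mapsto tx$ --- this is part (ii) of Proposition~\ref{proposition:renorm}, and the substitution $t^{\alpha-1}\,dt$ versus $\theta_\alpha(dt) = \alpha t^{-(\alpha+1)}\,dt$ must be tracked by reparametrizing $t \mapsto 1/t$, which turns $\int h(tx) t^{\alpha - 1}\, dt$ into $\int h(tx) t^{-(\alpha+1)}\, dt$ up to the factor $\alpha$. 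So \eqref{eq:nu-h-integral} $\iff$ \eqref{Eq:levy-condition-pi-general}. For the support statement: under the polar isomorphism of Lemma~\ref{lemma:pair-of-maps}, the set $\bigcap_h\{h = 0\}$ is a scale-invariant (orbit-union) set, say $E$; then $\nu(E) = \alpha \int_0^\infty \pi(tE) t^{\alpha-1}\,dt = \alpha\,\pi(E)\int_0^\infty t^{\alpha - 1}\, dt$ if $E$ is scale-invariant so $tE = E$, which is $0$ or $\infty$; since $\nu$ is $\sigma$-finite and $\alpha$-homogeneous, $\nu(E) = 0 \iff \pi(E) = 0$, and $E = (0,\infty)E = (0,\infty)\bigcap_h\{h=0\}$, giving the equivalence of the two support conditions. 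The converse direction (ii)$\Rightarrow$(i) is then the easy computation: $\nu$ defined by \eqref{Eq:nu-rep} is automatically $\alpha$-homogeneous and $\sigma$-finite (the $U_{h,q}$ have finite mass and cover up to null sets), and \eqref{eq:nu-h-integral}, \eqref{Eq:nu-support-h} follow by reversing the Tonelli computations above.

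The main obstacle I anticipate is the measurability bookkeeping around the sets $U_{h,q} = \bigcap_{s \ge 1}\{x : h(sx) \ge q\}$: a priori this is an uncountable intersection, and one genuinely needs the one-sided continuity of $t \mapsto h(tx)$ to replace it by a countable intersection (over $s \in [1,\infty) \cap \bQ$, say, in the right-continuous case, with an appropriate mirror-image argument when $h(\cdot\,x)$ is left-continuous). One must be a little careful that right-continuity in $t$ for each fixed $x$ does not by itself give joint measurability of $(x,t) \mapsto h(tx)$; but $h(tx)$ is the composition of the jointly measurable scaling $(x,t) \mapsto tx$ with the measurable $h$, so joint measurability is free, and the right-continuity is used only to collapse the intersection. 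A secondary subtlety, already flagged in the remark after Proposition~\ref{proposition:renorm}, is that $[1,\infty)W_k$ and similar sets need not lie in $\salg$ without a projection-theorem argument; this is why the theorem is stated under Assumption~\ref{assumption:X} (a transversal $\bS$ exists) so that Lemma~\ref{lemma:projection} and Lemma~\ref{lemma:pair-of-maps} apply and the polar map is bimeasurable on $\salg^\nu$. Modulo those points, everything else is Tonelli's theorem and the change of variables $t \leftrightarrow 1/t$.
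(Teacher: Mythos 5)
Your reduction to Proposition~\ref{proposition:renorm} founders at the coverage condition (ii) of Assumption~\ref{assumption:U}. Your sets $U_{h,q}=\{x:\, h(sx)\ge q \text{ for all } s\ge 1\}$ are indeed measurable and satisfy $[1,\infty)U_{h,q}=U_{h,q}$ with $\nu(U_{h,q})\le q^{-1}\int h\,d\nu<\infty$, but the argument you give for coverage is wrong: knowing $h(sx)\ge q$ on a neighborhood of some $s_0$ does \emph{not} put $s_0x$ in $U_{h,q}$, because membership requires the lower bound for \emph{every} $s\ge 1$, i.e.\ along the whole upper ray of the orbit. One-sided continuity gives you an interval of scales on which $h(\cdot\,x)$ stays above a level, never a ray. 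In the paper's flagship Example~\ref{example:Rd}, with $h(x)=1-\cos\langle u,x\rangle$, the function $t\mapsto h(tx)$ vanishes at arbitrarily large $t$ whenever $\langle u,x\rangle\ne 0$, so every $U_{h,q}$ is empty and your family covers nothing, even though the hypotheses \eqref{eq:nu-h-integral} and \eqref{Eq:nu-support-h} hold. The paper avoids this by demanding the bound only on a small compact window of scales: it works with $U_{nkrj}=\{x\in\bS:\, h_n(sx)\ge 2^{-k},\ s\in[r,r+2^{-j}]\}$, for which right-continuity \emph{does} give coverage (any point of positivity of $h_n$ along an orbit yields such a window), gets $\nu(J_{rj}U_{nkrj})<\infty$ directly from \eqref{eq:nu-h-integral}, and then, crucially, uses the $\alpha$-homogeneity of $\nu$ via Lemma~\ref{lemma:projection}(ii) and Proposition~\ref{proposition:sphere}(ii) to upgrade finiteness on the bounded scale-window $[r,r+2^{-j}]W$ to finiteness on the ray-invariant set $[1,\infty)W$ required in \eqref{Eq:nu-finite}. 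That homogeneity step is exactly what your construction tries to bypass by building ray-invariance into the sets, and it is what you cannot do without losing coverage.

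A secondary problem is your treatment of the support condition: you assert that $E:=\bigcap_{h\in\cH}\{h=0\}$ is scale-invariant, and your equivalence $\nu(E)=0\iff\pi(E)=0$ rests on $tE=E$. This is unjustified and false in general: $h(x)=0$ for all $h$ says nothing about $h(tx)$ for $t\ne 1$. What the paper actually extracts, via right-continuity, is that for $x\in\bS\setminus\bS'$ one has $h_n(tx)=0$ for \emph{all} $t>0$ and all $n$, i.e.\ the whole orbit lies in $E$; combined with \eqref{Eq:nu-support-h} this makes $(0,\infty)(\bS\setminus\bS')$ a $\nu$-null, genuinely scale-invariant set that can be discarded before invoking Propositions~\ref{proposition:sphere} and \ref{proposition:renorm}. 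Your Tonelli computation identifying \eqref{eq:nu-h-integral} with \eqref{Eq:levy-condition-pi-general} is fine once the polar decomposition is in hand, but both the construction of the decomposition and the handling of the zero set need the windowed sets and the homogeneity argument, not the ray-level sets you propose.
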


\begin{proof}
  \textsl{Necessity.} Enumerate $\cH$ as $\{h_n\}_{n \in \bN}$. Assume
  that for all $x \in \bX$ the function $t \mapsto h_n(tx)$ is 
	right-continuous in $t\in\bR_{++}$.  Denote by $\bQ_{++}$ the set of
  strictly positive rational numbers. For $n,k,j\in\bN$ and
  $r\in\bQ_{++}$, define
  \begin{displaymath}
    U_{nkrj}:=\{x\in\bS:\; h_n(sx)\geq 2^{-k},\; s\in[r,r+2^{-j}]\}.
  \end{displaymath}
  Since $B:=\{x \in \bS:\;h_n(x)\geq 2^{-k}\} \in \salg$, the
	right-continuity property and Remark~\ref{remark:scaling-one-s} yield that
  \begin{displaymath}
    U_{nkrj}=\bigcap_{s\in\bQ\cap[r,r+2^{-j}]}s^{-1}B\in \salg.
  \end{displaymath}

  Put $J_{rj} :=[r,r+2^{-j}]$. Then $J_{nj} U_{nkrj} \in \salg^\nu$
  and, by \eqref{eq:nu-h-integral},
  \begin{displaymath}
    \nu(J_{nj} U_{nkrj}) 
    \leq 2^k \int_{J_{nj} U_{nkrj}} h_n(x) \, \nu(dx)
    \leq 2^k \int_\bX h_n(x) \, \nu(dx)<\infty.
  \end{displaymath}
  
  Put $\bS' = \bigcup_{n,k,r,j} U_{nkrj}$.  If $x\in\bS\setminus\bS'$,
  $n,k,j \in \bN$, and $r \in \bQ_{++}$, there exists $s \in
  [r,r+2^{-j}]$ (depending on $x,n,k,r,j$) such that $h_n(s
  x)<2^{-k}$.  The right-continuity of $t \mapsto h_n(t x)$ yields
  that $h_n(r x)=0$ for all $r\in\bQ_{++}$ and thence $h_n(tx)=0$ for
  all $t\in\bR_{++}$.
	
  Enumerate $(U_{nkrj}, J_{rj})$, $n,k,j\in\bN$ and $r\in\bQ_{++}$, as
  $(W_n, I_n)$, $n \in \bN$, so that $\bS' = \bigcup_{n \in \bN} W_n$
  and $\nu(I_n W_n)< \infty$ for all $n \in \bN$.  By
  \eqref{Eq:nu-support-h}, the measure $\nu$ assigns all of its mass
  to the set $\bX':= (0,\infty) \bS' \in \salg^\nu$, and the result
  follows from the decomposition of $\nu$ guaranteed by
  Proposition~\ref{proposition:renorm} and
  Proposition~\ref{proposition:sphere}.  Finally,
  \eqref{Eq:levy-condition-pi-general} follows from the change of
  variables in \eqref{eq:nu-h-integral} using the polar decomposition of
  $\nu$ as $\pi\otimes\theta_\alpha$.

  If all functions $h_n(tx)$ are left continuous in $t\in\bR_{++}$,
  then the definition of $U_{nkrj}$ should be modified by working with
  the interval $[r-2^{-j},r]$ for $2^{-j}<r$. 

  \textsl{Sufficiency} is immediate by checking that the measure $\nu$
  defined by \eqref{Eq:nu-rep} is $\alpha$-homogeneous.
\end{proof}

\begin{remark}
  \label{rem:sphere-from-h}
  If $h_n(tx)\to 0$ as $t\downarrow0$ for all $n\in\bN$, then it is
  not necessary to require Assumption~\ref{assumption:X} in
  Theorem~\ref{thr:general-polar}. A measurable transversal can be
  constructed as follows. For each $n\in\bN$, let
  \begin{displaymath}
    W_n:=\{x\in\bX:\; h_k(x)=0,k<n,\; h_n(x)\neq 0\}.
  \end{displaymath}
Next, partition $W_n$ into measurable sets
  \begin{align*}
    W_{nj}&:=\{x\in W_n:\; \sup_{t>0}
    h_n(tx)\in(2^{-j},2^{-j+1}]\},\qquad j\geq 1,\\
    W_{n0}&:=\{x\in W_n:\; \sup_{t>0}
    h_n(tx)>1\}.
  \end{align*}
  Finally, define
  \begin{displaymath}
    \tau_{nj}(x):=\inf\{t>0:\; h_n(tx)>2^{-j}\},\qquad x\in W_{nj},
  \end{displaymath}
  and
  \begin{displaymath}
    S_{nj}:=\{\tau_{nj}(x)^{-1}x:\; x\in W_{nj}\}. 
  \end{displaymath}
  Then $\bS:=\bigcup_{n,j\in\bN} S_{nj}$ satisfies
  Assumption~\ref{assumption:X} in the complement of the set
  $\{x\in\bX:\; h_n(x)=0,n\in\bN\}$.
\end{remark}

\begin{remark}
  Theorem~\ref{thr:general-polar} asserts that an $\alpha$-homogeneous
  measure $\nu$ is the push-forward of the measure $\pi\otimes
  \theta_\alpha$ under the map $(x,t)\mapsto tx$ from $\bX \times
  \bR_{++}$ to $\bX$. In this case we say that $\nu$ admits a
  \emph{polar representation}.  It follows from the proof that we may
  replace $\bX$ by a subset that is invariant under the action of
  $\bR_{++}$ in such a way that the probability measure $\pi$ assigns
  all of its mass to a transversal.
\end{remark}

\section{Strictly stable random elements on convex cones}
\label{sec:strictly-stable-rand}
\label{sec:convex-cones}

\begin{definition}
  A \emph{convex cone} $\bK$ is an abelian measurable
  semigroup with neutral element $\neutral$ and a scaling
  $(x,a) \mapsto a x$ by $\bR_{++}$ that has the properties
  \begin{align*}
    a(x+y)&=ax+ay,\qquad a>0,\;x,y\in\bK,\\
    a\neutral&=\neutral,\qquad a>0.
  \end{align*}
\end{definition}

\begin{remark}
  The simplest examples of convex cones are $\bK = \bR^d$ and $\bK =
  \bR_+^d$ with the usual scaling by $\bR_{++}$, but there are many
  other examples, some of which we will recall later in this paper.
\end{remark}

\begin{remark}
  In contrast to the many classical studies of convex cones that are convex
  subsets of vector spaces over the reals which are closed under
  multiplication by nonnegative scalars (see, for example, \cite{fuc:lus81}),
  we do not assume the validity of the second distributivity law; that
  is, we do not require that $ax+bx=(a+b)x$ for $a,b > 0$ and $x \in
  \bK$.

  Stable random elements of convex cones have been studied in
  \cite{dav:mol:zuy08} under the assumptions that the scaling is
  jointly continuous and that $\bK':=\bK\setminus\{\neutral\}$ is a
  Polish space.
\end{remark}

\begin{definition}
  An \emph{involution} is a measurable map $x\mapsto x^*$ satisfying
  $(x+y)^*=x^*+y^*$, $(ax)^*=ax^*$, and $(x^*)^*=x$ for all
  $x,y\in\bK$ and $a>0$.  We assume that $\bK$ is equipped with an
  involution.
\end{definition}

\begin{definition}
  A measurable function $\chi$ that maps $\bK$ into the unit
  disk $\bD$ in the complex plane is called a {\em bounded
    semicharacter} (or, more briefly, a \emph{character}) if
  $\chi(\neutral)=1$, $\chi(x+y)=\chi(x)\chi(y)$, and
  $\chi(x^*)=\overline{\chi(x)}$ for all $x,y\in\bK$.
\end{definition}

\begin{remark}
  The family of all characters form a convex cone when equipped with
  pointwise multiplication as the semigroup operation, the topology of
  pointwise convergence, the neutral element $\one$ being the
  character identically equal to $1$, the involution being the complex
  conjugate, and the scaling defined by $(a\chi)(x):=\chi(ax)$,
  $x\in\bK$, $a>0$.
\end{remark}

We assume in the following that there exists a \emph{separating}
family $\bKH$ of characters in the usual sense that for each $x\neq y$
there exists $\chi\in\bKH$ such that $\chi(x)\neq\chi(y)$.  Such a
family does not exist for all semigroups, see
\cite[Ex.~8.20]{dav:mol:zuy08}. We also assume that the characters in
$\bKH$ generate the $\sigma$-algebra on $\bK$ and that the family
$\bKH$ is closed under taking finite products, contains the constant
function $1$, and so is a semigroup itself.  Note that $\bKH$ is not
assumed to be closed under scaling and so it is not necessarily a convex cone.

The distribution of a $\bK$-valued random element $\xi$ is
determined by its \emph{Laplace transform} $\bE\chi(\xi)$,
$\chi\in\bKH$, (see, for example, \cite[Th.~5.4]{dav:mol:zuy08}).
A random element $\xi$ is said to be \emph{symmetric} if
$\xi\eqd\xi^*$, that is, $\xi$ coincides in distribution with its
involution. The Laplace transform of a symmetric random element takes
only real values. Recall that the classical L\'evy-Khinchin-It\^o
description of infinitely divisible random elements of $\bR^d$ can
involve subtracting ``compensating'' terms to achieve convergence
of a sum of the points in a Poisson point process that would otherwise
be divergent, but that such compensation is not necessary when the
random elements are symmetric in the usual sense for $\bR^d$-valued
random elements (which is the special case of the sense considered
here with the involution given by $x \mapsto -x$).  Since no such
recentering using subtraction is possible in the general semigroup
setting, we mostly consider symmetric random elements. If the
involution is the identity, then all random elements are symmetric.

\begin{definition}
  A random element $\xi$ in $\bK$ is said to be \emph{strictly
    $\alpha$-stable} if
  \begin{equation}
    \label{Eq:stable}
    a^{1/\alpha}\xi'+b^{1/\alpha}\xi''
    \eqd (a+b)^{1/\alpha}\xi\,,\qquad a,b>0\,,
  \end{equation}
  where $\xi'$ and $\xi''$ are i.i.d. copies of $\xi$.
\end{definition}

In general cones, any value $\alpha\neq0$ is possible, see
\cite{dav:mol:zuy08}. Since by redefining the scaling from $ax$ to
$a^{-1}x$ it is possible to turn a negative $\alpha$ into a positive
one, in the following we consider only the case $\alpha>0$.

\begin{remark}
  An alternative definition of strictly stable random elements that
  coincides with the above for $\bR^d$ (and in many other situations)
  requires that, for all $n\geq2$, there exists $a_n>0$ such that
  $\xi_1+\cdots+\xi_n\eqd a_n\xi$, where $\xi,\xi_1,\dots,\xi_n$ are
  i.i.d. and $n\geq2$. While \eqref{Eq:stable} implies this condition
  immediately, extra assumptions related to the semicontinuity
  property of characters and the continuity of the scaling operation
  are needed for the equivalence of the two definitions, see
  \cite[Th.~5.16]{dav:mol:zuy08}. The major step is to establish that
  $a_n=n^{1/\alpha}$, after which \eqref{Eq:stable} follows easily.
\end{remark}

A strictly $\alpha$-stable $\xi$ is always \emph{infinitely divisible}
and so its Laplace transform satisfies
\begin{equation}
  \label{Eq:Laplace-exponent}
  \bE\chi(\xi)=\exp\{-\phi(\chi)\},\qquad \chi\in\bKH\,,
\end{equation}
for a negative definite complex-valued function $\phi$ on $\bKH$ with
$\Re\phi\in[0,\infty]$ and $\phi(\one)=0$, see \cite[Th.~3.2.2,
Prop.~4.3.1]{ber:c:r}. The function $\phi$ is called the \emph{Laplace
  exponent} of $\xi$. 
	
A Laplace exponent is associated with a unique \emph{L\'evy measure}
$\nu$, a Radon measure on the semigroup of all bounded characters on
$\bKH$, see \cite{ber:c:r}.  In the following we always assume that the
L\'evy measure is supported by $\bK':=\bK\setminus\{\neutral\}$
canonically embedded 
into the semigroup of all bounded semicharacters on $\bKH$ and
that it is $\sigma$-finite on $\bK'$. In this case we say that $\xi$
\emph{admits a L\'evy measure}.  The L\'evy measure $\nu$ satisfies
\begin{equation}
  \label{eq:lev-mes}
  \int_{\bK'}(1-\Re \chi(x))\,\nu(dx)<\infty
\end{equation}
for all $\chi\in\bKH$.  The measure $\nu$ is the intensity measure of
a Poisson process $\{\eta_i: i\in\bN\}$ on $\bK'$, and the
appropriately defined (if necessary using principal values and
compensating terms) sum of the points $\eta_i$ yields an infinitely
divisible random element that is said to not have deterministic,
Gaussian or idempotent components.

\begin{lemma}
  \label{lemma:non-zero}
  Assume for some $\chi\in\bKH$ that
  \begin{equation}
    \label{Eq:lim-chi-positive}
    \liminf_{t\downarrow0} \Re\chi(tx)>0
  \end{equation}
  for all $x\in\bK$.  If $\xi$ is a strictly $\alpha$-stable random
  element, then $\bE\chi(\xi)\neq 0$.
\end{lemma}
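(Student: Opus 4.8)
The plan is to argue by contradiction: assume $\bE\chi(\xi)=0$, use strict stability to propagate this vanishing to arbitrarily small scalings of $\xi$, and then contradict the hypothesis \eqref{Eq:lim-chi-positive} via Fatou's lemma.

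First I would extract the consequence of strict stability that I need. Setting $a=b=c$ in \eqref{Eq:stable} gives $c^{1/\alpha}\xi'+c^{1/\alpha}\xi''\eqd(2c)^{1/\alpha}\xi$. Applying the character $\chi$, using $\chi(x+y)=\chi(x)\chi(y)$ together with the independence and equality in distribution of $\xi'$, $\xi''$, and $\xi$, yields
\[
\bigl(\bE\chi(c^{1/\alpha}\xi)\bigr)^2=\bE\chi\bigl((2c)^{1/\alpha}\xi\bigr),\qquad c>0.
\]
(Each such expectation is well defined because, for fixed $t>0$, the map $x\mapsto\chi(tx)$ is measurable and bounded by $1$.) Starting from $\bE\chi(\xi)=0$ and iterating this identity with $c=2^{-(n+1)}$, a straightforward induction on $n$ shows $\bE\chi(2^{-n/\alpha}\xi)=0$ for every $n\ge0$. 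Since $\alpha>0$, the numbers $t_n:=2^{-n/\alpha}$ decrease to $0$, and taking real parts gives $\bE[\Re\chi(t_n\xi)]=\Re\bE\chi(t_n\xi)=0$ for all $n$.

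Next I would apply Fatou's lemma to the nonnegative random variables $1+\Re\chi(t_n\xi)$ (nonnegative because $\chi$ takes values in $\bD$, so $\Re\chi\ge-1$):
\[
\bE\Bigl[\liminf_{n\to\infty}\bigl(1+\Re\chi(t_n\xi)\bigr)\Bigr]
\le\liminf_{n\to\infty}\bE\bigl[1+\Re\chi(t_n\xi)\bigr]=1.
\]
On the other hand, for every $\omega$ the hypothesis \eqref{Eq:lim-chi-positive} applied to $x=\xi(\omega)$ gives $\liminf_{n\to\infty}\Re\chi(t_n\xi(\omega))\ge\liminf_{t\downarrow0}\Re\chi(t\xi(\omega))>0$, so $\liminf_{n}\Re\chi(t_n\xi)$ is a (measurable, being a countable liminf of measurable functions) strictly positive random variable and hence has strictly positive expectation. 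Thus the left-hand side above equals $1+\bE[\liminf_{n}\Re\chi(t_n\xi)]>1$, a contradiction. Therefore $\bE\chi(\xi)\ne0$.

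This argument is short, and the only technical points are the interchange of limit and expectation, handled by Fatou, and the measurability of $\liminf_{n}\Re\chi(t_n\xi)$, which is automatic. The one genuinely essential idea is that one must use strict $\alpha$-stability, not merely infinite divisibility, to move the vanishing of the Laplace transform to \emph{small} scales: the substitution $a=b=c$ in \eqref{Eq:stable} contracts the scale, whereas the bare divisibility relation $\xi_1+\cdots+\xi_n\eqd a_n\xi$ would only let us enlarge it.
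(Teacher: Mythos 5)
Your proof is correct and takes essentially the same route as the paper: the paper uses the $n$-fold stability identity $\bE\chi(\xi)=\bigl(\bE\chi(n^{-1/\alpha}\xi)\bigr)^n$ to force $\bE\,\Re\chi(n^{-1/\alpha}\xi)=0$ at the scales $n^{-1/\alpha}\downarrow 0$, whereas you iterate the two-fold identity to reach the scales $2^{-n/\alpha}$, and both arguments then contradict \eqref{Eq:lim-chi-positive} via Fatou's lemma. Your version of the Fatou step, taken along a countable sequence after shifting by $1$, is if anything a bit more careful about measurability than the paper's continuous-parameter $\liminf$.
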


\begin{proof}
  Since $-1 \le \Re\chi(t x) \le 1$ for all $t > 0$ and $x \in \bK$,
  it follows from the assumption \eqref{Eq:lim-chi-positive} and
  Fatou's Lemma that
  \begin{equation}
    \label{eq:Fatou_for_char}
    0 < \bE \left[\liminf_{t\downarrow0} \Re\chi(t\xi)\right]
    \le \liminf_{t\downarrow0} \bE \Re\chi(t\xi).
  \end{equation}
  By the stability property of $\xi$, $\bE\chi(\xi) =
  (\bE\chi(n^{-1/\alpha}\xi))^n$ for all $n \in \bN$, and so
  $\bE\chi(\xi) = 0$ would imply that $\bE\chi(n^{-1/\alpha}\xi)=0$
  and hence $\bE\,\Re\chi(n^{-1/\alpha}\xi)=0$ for all $n \in \bN$,
  but this contradicts \eqref{eq:Fatou_for_char}.
\end{proof}

\begin{lemma}
  \label{lemma:homogeneous}
  Assume that \eqref{Eq:lim-chi-positive} holds for all $\chi\in\bKH$.
  Then the L\'evy measure of a strictly $\alpha$-stable random
  element is an $\alpha$-homogeneous measure on $\bK'$.
\end{lemma}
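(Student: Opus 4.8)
The plan is to turn the defining scaling relation \eqref{Eq:stable} into an additive functional equation for the L\'evy measure $\nu$, using uniqueness of the L\'evy measure together with the way the L\'evy--Khinchin--It\^o correspondence interacts with scaling a random element and with adding two independent ones, and then to solve that equation.

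For $c>0$ let $D_c$ denote the map $x\mapsto cx$; it sends $\bK'$ into itself, since $cx=\neutral$ forces $x=c^{-1}(cx)=c^{-1}\neutral=\neutral$. Two facts are needed. Because $\bE\chi(c\xi)=\bE(c\chi)(\xi)$ for $\chi\in\bKH$, where $(c\chi)(x):=\chi(cx)$, the element $c\xi$ is again infinitely divisible, and a change of variables in the L\'evy--Khinchin representation of its Laplace exponent shows that $c\xi$ admits the L\'evy measure $D_c\nu$, given by $(D_c\nu)(B)=\nu(c^{-1}B)$ in the notation of Remark~\ref{remark:scaling-one-s}; moreover, since Laplace exponents add over independent summands, so do the corresponding L\'evy measures. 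The hypothesis of the lemma is used precisely here: by Lemma~\ref{lemma:non-zero}, $\bE\chi(\xi)\neq0$, equivalently $\Re\phi(\chi)<\infty$, for every $\chi\in\bKH$, so $\xi$ has no idempotent component and the L\'evy measure is all of the non-deterministic, non-Gaussian part of $\xi$; this is what legitimates applying uniqueness of the L\'evy measure and the additive bookkeeping just described. Equating the L\'evy measures of the two sides of \eqref{Eq:stable} then gives
\[
  D_{a^{1/\alpha}}\nu+D_{b^{1/\alpha}}\nu=D_{(a+b)^{1/\alpha}}\nu,\qquad a,b>0,
\]
that is, for each fixed $B\in\salg$,
\[
  \nu(a^{-1/\alpha}B)+\nu(b^{-1/\alpha}B)=\nu\bigl((a+b)^{-1/\alpha}B\bigr),\qquad a,b>0.
\]

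Thus $g_B(a):=\nu(a^{-1/\alpha}B)$ is an additive function from $\bR_{++}$ to $[0,\infty]$. Any such function is linear: if $g_B(a_0)=\infty$ for some $a_0$ then $g_B\equiv\infty$ --- immediately for $a\ge a_0$ by additivity, and for $a<a_0$ because $g_B(na)=ng_B(a)$ and $na>a_0$ once $n$ is large --- while if $g_B$ is everywhere finite it is nonnegative and additive, hence nondecreasing, hence $g_B(a)=a\,g_B(1)$; in both cases $g_B(a)=a\,\nu(B)$ with the convention $\infty\cdot a=\infty$. Putting $a=s^{-\alpha}$ yields $\nu(sB)=s^{-\alpha}\nu(B)$ for all $s>0$ and all $B\in\salg$, which is exactly \eqref{eq:nu-x-homogeneous}, so $\nu$ is an $\alpha$-homogeneous measure on $\bK'$. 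I expect the main obstacle to be the bookkeeping of the middle paragraph rather than anything deep: one must verify that $a^{1/\alpha}\xi$ and $a^{1/\alpha}\xi'+b^{1/\alpha}\xi''$ genuinely admit L\'evy measures in the precise sense fixed earlier --- $\sigma$-finite and carried by $\bK'$ as embedded in the semigroup of bounded characters --- so that the uniqueness theorem of \cite{ber:c:r} applies to the distributional identity \eqref{Eq:stable}; the solution of the additive functional equation afterwards is routine.
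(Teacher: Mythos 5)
Your argument is correct, and it reaches the conclusion by a genuinely different route: you run the Cauchy functional equation at the level of the L\'evy measure, whereas the paper runs it at the level of the Laplace exponent. The paper deduces from \eqref{Eq:stable} that $\phi(a^{1/\alpha}\chi)+\phi(b^{1/\alpha}\chi)=\phi((a+b)^{1/\alpha}\chi)$, with $\phi$ finite by Lemma~\ref{lemma:non-zero} (this is where \eqref{Eq:lim-chi-positive} enters, exactly as in your proof), solves this to get $\phi(a\chi)=a^\alpha\phi(\chi)$ --- a step that, since $\phi$ is complex-valued, needs a regularity input, supplied by the measurability of $a\mapsto\bE\chi(a\xi)$ via Fubini --- and only then passes to $\nu$: the L\'evy measure of $a\xi$ is the push-forward of $\nu$ under $x\mapsto ax$, while by the homogeneity of $\phi$ it equals $a^\alpha\nu$, and uniqueness gives \eqref{eq:nu-x-homogeneous}. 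You instead equate the L\'evy measures of the two sides of \eqref{Eq:stable} directly, which requires one ingredient the paper never invokes explicitly, namely that L\'evy measures add over independent summands; this is true by additivity of the Laplace exponents and uniqueness of the representation in \cite{ber:c:r}, and it is at the same level of rigor as the paper's own use of the push-forward identification and of the fact that scaling the exponent by $a^\alpha$ scales the L\'evy measure, so it is an acceptable step, though you should be aware it is the load-bearing bookkeeping in your version. In exchange you obtain an additive equation for the nonnegative, possibly infinite quantity $g_B(a)=\nu(a^{-1/\alpha}B)$, whose solutions are linear by monotonicity alone, so you need no measurability-in-$a$ argument and the infinite-mass case is handled cleanly. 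Both proofs use Lemma~\ref{lemma:non-zero} for the same purpose and both rest on identifying the L\'evy measure of $c\xi$ as $B\mapsto\nu(c^{-1}B)$ (you state this push-forward with the correct inverse) plus uniqueness; your version trades the paper's measurable-Cauchy step for the additivity of the exponent-to-L\'evy-measure correspondence.
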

\begin{proof}
  It follows from \eqref{Eq:stable} that 
  \begin{displaymath}
    \phi(a^{1/\alpha}\chi)+\phi(b^{1/\alpha}\chi)
    =\phi((a+b)^{1/\alpha}\chi),\quad a,b>0\,,
  \end{displaymath}
  where the Laplace exponent $\phi$ of the strictly stable random
  element $\xi$ is finite by Lemma~\ref{lemma:non-zero}.  Since
  $(x,a)\mapsto ax$ is a jointly measurable map, the function $a
  \mapsto \bE\chi(a\xi)$ is measurable by Fubini's theorem. Therefore,
  \begin{equation}
    \label{eq:phi-homogeneous}
    \phi(a\chi)=a^\alpha\phi(\chi),\qquad a>0.
  \end{equation}
  The random element $a\xi$ is also infinitely divisible and its
  L\'evy measure is $B \mapsto \nu(aB)$ for measurable subsets $B\subseteq\bK'$. By
  \eqref{eq:phi-homogeneous}, the L\'evy measure of $a\xi$ is
  $a^\alpha\nu$. Since the L\'evy measure is unique, we obtain
  \eqref{eq:nu-x-homogeneous}.
\end{proof}

\begin{theorem}
  \label{thr:levy-k-separating-countable}
  Let $\xi$ be a strictly stable random element that admits a
  $\sigma$-finite L\'evy measure $\nu$.  Assume that there is a
  countable, closed under finite products, separating family of
  characters $\bKH$ such that $t \mapsto \Re\chi(tx)$, $t\in\bR_{++}$,
  is right- (or left-) continuous for all $x\in\bK'$ and $\chi \in
  \bKH$. Assume also that Assumption~\ref{assumption:X} holds and
  \eqref{Eq:lim-chi-positive} holds for all $\chi\in\bKH$.

  Then $\nu$ admits the polar representation
  $\pi\otimes\theta_\alpha$, where $\pi$ is a probability measure on
  $\bK'$ satisfying
  \begin{equation}
    \label{eq:eps-condition}
    \int_0^\infty \bE[1-\Re\chi(t\eps)] t^{-(\alpha+1)} \, dt <\infty,
    \qquad \chi\in\bKH.
  \end{equation}
  The Poisson process with intensity measure $\nu$ can be represented
  as $\{\Gamma_i^{-1/\alpha}\eps_i\}_{i\in\bN}$, where
  $\{\eps_i\}_{i\in\bN}$ is a sequence of i.i.d. copies of a random
  element $\eps$ in $\bK'$ with distribution $\pi$, and
  $\{\Gamma_i\}_{i\in\bN}$ are successive points of a unit intensity
  Poisson process on $\bR_+$.  If $\xi$ is symmetric, then $\eps$ can
  also be chosen to have a symmetric distribution.
\end{theorem}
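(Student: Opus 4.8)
The plan is to obtain the polar decomposition of $\nu$ from Theorem~\ref{thr:general-polar} applied to the countable family $\cH:=\{\,x\mapsto 1-\Re\chi(x)\ :\ \chi\in\bKH\,\}$ of nonnegative functions on $\bK'$, and then to read off the series representation of the Poisson process by feeding the polar decomposition of $\nu$ (as the push-forward of $\pi\otimes\theta_\alpha$ under $(x,t)\mapsto tx$) into the standard mapping and marking theorems for Poisson processes.

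First I would record that $\nu$ is $\alpha$-homogeneous on $\bK'$ by Lemma~\ref{lemma:homogeneous} (this is where \eqref{Eq:lim-chi-positive} enters), and is $\sigma$-finite by hypothesis. Then I would check the hypotheses of Theorem~\ref{thr:general-polar} for $\cH$: each $h=1-\Re\chi$ is nonnegative because $|\chi|\le 1$; $\cH$ is countable because $\bKH$ is; $t\mapsto h(tx)=1-\Re\chi(tx)$ is right- or left-continuous on $\bR_{++}$ by assumption; Assumption~\ref{assumption:X} holds by hypothesis; the integrability \eqref{eq:nu-h-integral} is exactly \eqref{eq:lev-mes}, the defining property of a L\'evy measure; and \eqref{Eq:nu-support-h} holds because $|\chi|\le1$ forces $\bigcap_{\chi\in\bKH}\{x:\Re\chi(x)=1\}=\bigcap_{\chi\in\bKH}\{x:\chi(x)=\chi(\neutral)\}$, which equals $\{\neutral\}$ by the separation property of $\bKH$, a set to which $\nu$ assigns no mass since it is carried by $\bK'$. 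Theorem~\ref{thr:general-polar} then produces a probability measure $\pi$ on $\bK'$ such that $\nu$ is the push-forward of $\pi\otimes\theta_\alpha$ under $(x,t)\mapsto tx$ (equivalently, \eqref{Eq:nu-rep} holds), and the bound \eqref{Eq:levy-condition-pi-general} applied to $h=1-\Re\chi$ is precisely \eqref{eq:eps-condition} once $\eps$ is taken to have law $\pi$.

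For the series representation, I would use that the substitution $t=u^{-1/\alpha}$ pushes Lebesgue measure on $(0,\infty)$ forward to $\alpha t^{-(\alpha+1)}\,dt=\theta_\alpha(dt)$, so that $\{\Gamma_i^{-1/\alpha}\}_{i\in\bN}$ is a Poisson process on $\bR_{++}$ with intensity $\theta_\alpha$. Attaching to its points an independent i.i.d.\ sequence $\{\eps_i\}_{i\in\bN}$ of law $\pi$ makes $\{(\eps_i,\Gamma_i^{-1/\alpha})\}_{i\in\bN}$ a Poisson process on $\bK'\times\bR_{++}$ with intensity $\pi\otimes\theta_\alpha$ by the marking theorem; pushing this forward by the jointly measurable map $(x,t)\mapsto tx$ and invoking the mapping theorem yields a Poisson process on $\bK'$ with intensity $\nu$ and points $\Gamma_i^{-1/\alpha}\eps_i$. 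The image is a genuine point process because $\nu$ is already the intensity measure of the Poisson process $\{\eta_i\}$ from the setup, and since a Poisson process is determined in law by its intensity this is the asserted representation.

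For the symmetric case, note that $\xi\eqd\xi^*$ forces $\nu$ to be invariant under the involution $x\mapsto x^*$, which commutes with the scaling; hence, writing $\pi^*$ for the image of $\pi$ under the involution, $\pi^*\otimes\theta_\alpha$ also pushes forward to $\nu$, and therefore so does $\bar\pi\otimes\theta_\alpha$ with $\bar\pi:=\tfrac12(\pi+\pi^*)$. The measure $\bar\pi$ is then a symmetric probability measure on $\bK'$ that still satisfies \eqref{eq:eps-condition}, since $\Re\chi(tx^*)=\Re\overline{\chi(tx)}=\Re\chi(tx)$, so we may replace $\pi$ by $\bar\pi$. I expect the two places needing care to be the verification of \eqref{Eq:nu-support-h}, where the separation hypothesis on $\bKH$ is essential, and the symmetrization step, which must be performed at the level of the (non-unique) polar representation rather than on a single fixed transversal, since $\pi$ and $\pi^*$ need not be carried by the same transversal.
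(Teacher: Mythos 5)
Your proposal follows essentially the same route as the paper: apply Theorem~\ref{thr:general-polar} with $\bX=\bK'$ and $\cH=\{1-\Re\chi:\chi\in\bKH\}$, using Lemma~\ref{lemma:homogeneous} for $\alpha$-homogeneity, \eqref{eq:lev-mes} for \eqref{eq:nu-h-integral}, and the separation property (together with $|\chi|\le 1$, so $\Re\chi(x)=1$ forces $\chi(x)=1=\chi(\neutral)$) for \eqref{Eq:nu-support-h}; the series representation is then the same marking/mapping argument, with \eqref{Eq:levy-condition-pi-general} giving \eqref{eq:eps-condition}. The one genuine divergence is the symmetric case: the paper goes back inside the proof of Proposition~\ref{proposition:renorm} and replaces each $W_k$ by the union of $W_k$ with its image under the involution, whereas you symmetrize at the level of the spectral measure, setting $\bar\pi=\tfrac12(\pi+\pi^*)$ and checking that $\pi^*\otimes\theta_\alpha$ still pushes forward to $\nu$ (because the involution commutes with the scaling and $\nu=\nu^*$ by uniqueness of the L\'evy measure) and that \eqref{eq:eps-condition} is preserved since $\Re\chi(tx^*)=\Re\chi(tx)$. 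Your averaging argument is correct and somewhat cleaner, as it does not require revisiting the transversal construction and makes explicit why $\pi$ and $\pi^*$ need not sit on the same transversal; the paper's modification buys instead a symmetric $\pi$ that is still concentrated on a single (involution-invariant) set of representatives.
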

\begin{proof}
  The measure $\nu$ admits the polar decomposition
  $\pi\otimes\theta_\alpha$ by Theorem~\ref{thr:general-polar} applied
  with $\bX:=\bK'$, $\cH := \{1 - \Re \chi: \; \chi \in \bKH\}$, and
  $\nu$ being the L\'evy measure of $\xi$. Indeed,
  \eqref{eq:nu-h-integral} follows from \eqref{eq:lev-mes}, $\nu$ is
  $\alpha$-homogeneous by Lemma~\ref{lemma:homogeneous} given that
  \eqref{Eq:lim-chi-positive} is assumed to hold, and the separating
  condition imposed on $\bKH$ yields \eqref{Eq:nu-support-h}.

  The Poisson point process with intensity measure $\theta_\alpha$ is
  obtained as $\{\Gamma_i^{-1/\alpha}: i\in\bN\}$. Thus, the Poisson
  process on $\bK'\times \bR_{++}$ with intensity measure
  $\pi\otimes\theta_\alpha$ is obtained by marking a Poisson point
  process $\{\Gamma_i^{-1/\alpha}: i\in\bN\}$ with marks
  $\{\eps_i\}_{i\in\bN}$ that are i.i.d. copies of a random element
  $\eps$ in $\bK'$ with distribution $\pi$. By
  \eqref{Eq:levy-condition-pi-general}, the random element $\eps$
  satisfies \eqref{eq:eps-condition}. Since $\nu$ is the push-forward
  of $\pi\otimes\theta_\alpha$ under the multiplication map
  $(x,t)\mapsto tx$, the Poisson process with intensity $\nu$ is given
  by $\{\Gamma_i^{-1/\alpha}\eps_i:\; i\in\bN\}$.
  
  The uniqueness of the L\'evy measure yields that $\nu$ is symmetric
  if $\xi$ is symmetric. Then in the proof of
  Proposition~\ref{proposition:renorm} it suffices to replace each set
  $W_k$ with the union of $W_k$ and its image under the involution to
  ensure that $\pi$ is a symmetric measure.
\end{proof}

\begin{remark}
  Note that the random element $\eps$ in
  Theorem~\ref{thr:levy-k-separating-countable} is not unique and also
  is not restricted to belong to the transversal $\bS$ from
  Assumption~\ref{assumption:X}. By rescaling it is always possible to
  arrange that $\eps\in\bS$ a.s., however in this case, the Poisson
  process with intensity $\nu$ is given by
  $\{c\Gamma_i^{-1/\alpha}\eps_i:\; i\in\bN\}$ for a positive scaling
  constant $c$.
\end{remark}

\begin{remark}
  Suppose that the ``L\'evy-Khinchin-It\^o'' decomposition of the strictly
  $\alpha$-stable random element $\xi$ does not contain any deterministic, Gaussian
  or idempotent components, so that $\xi$ is the sum of the points in
  a Poisson process $\{\eta_i : i \in \bN\}$, where the sum is
  appropriately defined by using principal values and compensating
  terms if necessary, see \cite[Th.~7.2]{dav:mol:zuy08}.
  Theorem~\ref{thr:levy-k-separating-countable} establishes that
  $\{\eta_i : i \in \bN\}$ can be constructed as
  $\{\Gamma_i^{-1/\alpha}\eps_i: i \in \bN\}$, that is, as randomly
  scaled i.i.d. copies of a random element $\eps$.

  Recall that no compensating terms for the sum of the $\eta_i$ are
  required if $\xi$ is $\alpha$-stable and symmetric. In this case,
  the Laplace exponent of $\xi$ is given by
  \begin{displaymath}
    \phi(\chi) =\int_{\bK'}(1-\chi(x))\,\nu(dx)
    =\int_{\bK'}(1-\Re\chi(x))\,\nu(dx)\,,
    \quad \chi\in\bKH\,.
  \end{displaymath}
  Put
  \begin{equation}
    \label{eq:xi-r}
    \xi^{(r)} :=\sum_{\Gamma_i\leq r}
    \Gamma_i^{-1/\alpha}\eps_i\,,\qquad r>0.
  \end{equation}
  The probability generating functional formula for a Poisson point
  process yields that, for all $\chi\in\bKH$,
  \begin{align*}
    \bE \chi(\xi^{(r)}) 
    & =\exp\left\{-\int_{r^{-\alpha}}^\infty \int_{\bK'}
      (1-\chi(tx))\alpha t^{-(\alpha+1)} \,  \pi(dx) \, dt\right\}\\
    & \to \bE \chi(\xi)\qquad \text{as }
    r\uparrow\infty, 
  \end{align*}
  since $\bE\chi(t\eps)$ is real-valued by the symmetry of $\eps$ and
  the integral under the exponential converges by
  \eqref{eq:eps-condition}. 
\end{remark}

\section{Examples}
\label{sec:applications}

\begin{example}
  \label{example:Rd}
  If $\bK=\bR^d$ with the usual arithmetic addition, the involution given by
  the negation, the Borel $\sigma$-algebra, and conventional scaling
  by positive scalars, then Assumption~\ref{assumption:X} holds with
  $\bS$ being the sphere in $\bR^d$ with respect to any norm. A
  countable separating family of continuous characters is given by
  $\chi(x)=\exp\{\imath \langle u,x\rangle\}$, where $\langle
  u,x\rangle$ is the scalar product of $u\in\bQ^d$ and $x\in\bR^d$,
  and Theorem~\ref{thr:levy-k-separating-countable} yields the polar
  representation of L\'evy measures for strictly stable random
  vectors. In particular, each strictly stable random vector $\xi$
  corresponds to the Poisson process
  $\{\Gamma_i^{-1/\alpha}\eps_i:i\in\bN\}$. If $\alpha\in(0,1)$ or if
  $\xi$ is symmetric, then the sum of these points converges almost
  surely and yields the LePage series decomposition of $\xi$, see
  \cite{lep:wood:zin81}.
\end{example}

\begin{example}
  \label{ex:operator}
  In the setting of Example~\ref{example:Rd}, define the scaling by
  letting $tx:=\exp\{(\log t)\AA\}x$, $t>0$, for a non-degenerate
  matrix $\AA$. The corresponding stable elements are usually called
  \emph{operator stable}, see \cite{hud:mas81,shar69}. An operator
  stable random element is infinitely divisible and so admits a
  series representation and a L\'evy measure. By
  Theorem~\ref{thr:levy-k-separating-countable}, an operator stable
  random element $\xi$ in $\bR^d$ admits the LePage series
  representation
  \begin{displaymath}
    \xi\eqd \sum_{i\in\bN} \exp\{-\alpha^{-1}(\log\Gamma_i)\AA\} \eps_i. 
  \end{displaymath}
  Note that the polar decomposition of the L\'evy measure appears in
  \cite[Eq.~(2)]{hud:jur:veeh86} and \cite{hud:mas81}.
\end{example}

\begin{example}
  \label{ex:processes-values}
  Let $\bK$ be the family $\bR^{[0,\infty)}$ of real valued-functions
  on $\bR_+$ with the cylindrical $\sigma$-algebra, the pointwise arithmetic
  addition of functions as the semigroup operation, involution being
  the negation, and the scaling operation applied pointwise.
	It is known \cite{kab:stoev12,mar70} that an infinitely
  divisible separable in probability stochastic process $\xi(s)$,
  $s\in\bR_+$, can be associated with the Poisson process
  $\{\eta_i,i\in\bN\}$ on $\bR^{[0,\infty)}$ with a $\sigma$-finite
  intensity measure $\nu$, so that $\xi$ admits a L\'evy measure
  $\nu$. If $\xi$ is symmetric, then, for each $s\in\bR_+$,
  \begin{equation}
    \label{eq:id-processes}
    \xi(s)\eqd \lim_{r\downarrow 0}\sum_{|\eta_i(t)|\geq r}
    \eta_i(s),\qquad s\in\bR_+,
  \end{equation}
  where $\eqd$ in this setting denotes the equality of all
  finite-dimensional distributions.  It should be noted that the order
  of summands in \eqref{eq:id-processes} may change with $t$, and
  there might be no order of the summands that guarantees the
  convergence for all rational $s$, not to say for all
  $s\in\bR_+$. Such a common order exists for random functions with
  non-negative values (in which case $\bK$ is endowed with the
  identical involution).
  
  This cone does not admit a countable separating family of
  characters.  Let $\bKD$ be the countable family of characters
  defined by
  \begin{equation}
    \label{Eq:chi-line}
    \chi(x)=e^{\imath x(s) u},\qquad s\in\bQ_+,
  \end{equation}
  where $x=(x(s))_{s\geq 0}$ is an element of $\bR^{[0,\infty)}$ and
  $u$ belongs to the set $\bQ$ of rational numbers.  
  Theorem~\ref{thr:levy-k-separating-countable} applies to the process
  $\xi$ restricted onto the set $\bQ_+$ of non-negative rationals, so
  that the image $\tilde{\nu}$ of the L\'evy measure $\nu$ under the
  map that restricts a function to $\bQ_+$ has polar decomposition as
  $\pi\otimes\theta_\alpha$ and the Poisson process with intensity
  $\tilde{\nu}$ is given by
  $\{\Gamma_i^{-1/\alpha}\tilde{\eps}_i,i\geq1\}$. By
  \cite[Prop.~2.19]{kab:stoev12}, for each $t\in\bR_+\setminus\bQ_+$,
  the separability property of $\xi$ yields that
  $\Gamma_i^{-1/\alpha}\tilde{\eps}_i(t_n)$ converges in probability
  to $\eta_i(t)$, whence $\tilde{\eps}_i(t_n)$ converges in
  probability to $\eps_i(t):=\Gamma_i^{1/\alpha}\eta_i(t)$. Thus, the
  L\'evy measure $\nu$ corresponds to the Poisson process
  $\{\Gamma_i^{-1/\alpha}\eps_i,i\geq1\}$, where $\{\eps_i,i\in\bN\}$
  is a sequence of i.i.d. separable processes distributed as
  $\eps(s)$, $s\in\bR_+$. Condition \eqref{eq:eps-condition} in this
  setting is equivalent to $\bE|\eps(s)|^\alpha<\infty$ for all
  $s$. 

  In the symmetric case, following \eqref{eq:xi-r}, the convergence in
  \eqref{eq:id-processes} can be rephrased as
  \begin{displaymath}
    \xi(s)\eqd \lim_{r\uparrow\infty} \sum_{\Gamma_i\leq r} 
    \Gamma_i^{-1/\alpha}\eps_i(s),\qquad s\in\bR_+.
  \end{displaymath}
  Therefore, in case of symmetric $\alpha$-stable processes the order
  of summands in \eqref{eq:id-processes} can be made the same for all
  time points. This makes it possible to appeal to path
  regularity results for stochastic integrals from
  \cite[Th.~4]{ros89p} in order to confirm that $\eps$ shares path
  regularity properties with $\xi$, for instance, $\eps$ is almost
  surely right-continuous with left limits (c\`adl\`ag) if $\xi$ is
  c\`adl\`ag. The same holds for stochastic processes with almost
  surely non-negative values, since then the involution is the
  identity. The pointwise convergence of the LePage series yields the
  uniform convergence, see \cite{bas:ros13}. Note that a result
  concerning the existence of the series representation of a general
  (not necessarily symmetric stable) infinitely divisible c\`adl\`ag
  function using c\`adl\`ag summands is not available.
\end{example}

\begin{example}
  Let $\bK$ be the family of non-negative functions $x(s)$,
  $s\in\bR_+$, with the cylindrical $\sigma$-algebra, the semigroup
  operation being pointwise maximum, identical involution, and the
  scaling applied poinwisely to the values of the function. It is
  shown in \cite{kab:stoev12} that each separable max-infinitely
  divisible stochastic process admits a L\'evy measure. A separating
  family of characters is given by those of the form $x \mapsto
  \chi(x) = \one_{x(s)< a}$ for $s,a\in\bR_+$. While these characters
  are not continuous, the function $t\mapsto \chi(tx)$ is
  right-continuous. Restricting the functions to non-negative
  rationals as in Example~\ref{ex:processes-values} and using the
  results from \cite{kab:stoev12} concerning the max-infinitely
  divisible setting, we obtain that the L\'evy measure of each
  max-stable separable in probability stochastic process $\xi$ admits
  a polar representation and the process itself admits the series
  representation
  \begin{displaymath}
    \xi(s)\eqd \bigvee_{i\in\bN}\Gamma_i^{-1/\alpha}\eps_i(s), 
  \end{displaymath}
  which first appeared in \cite{haan84}.
\end{example}

\begin{example}
  \label{ex:time-stable}
  Equip the family $\bK$ of real-valued c\`adl\`ag functions on
  $\bR_+$ that vanish at the origin with pointwise arithmetic addition, involution
  being the negation, and the scaling defined by  $(tx)(s):=x(ts)$, $s\in\bR_+$, for
  $t\in\bR_{++}$. Stable elements in this cone with $\alpha=1$ are
  called \emph{time-stable} processes in \cite{kop:mol15} and
  {\em processes infinitely divisible with respect to time} in \cite{man05}.
  The characters are given by \eqref{Eq:chi-line} and, in view of the
  c\`adl\`ag assumption, they constitute a countable separating family
  and are right-continuous as required in
  Theorem~\ref{thr:levy-k-separating-countable}. Furthermore,
  $\chi(tx)=\exp\{\imath x(ts)u\}\to 1$ as $t\downarrow0$, so that a
  transversal in $\bK$ can be constructed as in
  Remark~\ref{rem:sphere-from-h}, see also \cite{kop:mol15}. Thus, if
  a time-stable process with symmetric distribution admits a series
  representation with c\`adl\`ag functions, it also admits the LePage
  representation
  \begin{displaymath}
    \xi(s)\eqd \sum_{i\in\bN} \eps_i(\Gamma_i^{-1}s),\qquad s\in\bR_+,
  \end{displaymath}
  where $\{\eps_i\}_{i\in\bN}$ are i.i.d. copies of $\eps(s)$,
  $s\in\bR_+$, such that 
  \begin{displaymath}
    \bE \int_0^\infty \min(1,\eps(s)^2)s^{-2}ds<\infty
  \end{displaymath}
  because of \eqref{eq:eps-condition}. 

  The setting can be altered by considering the family of non-negative
  c\`adl\`ag functions with the identical involution. In particular,
  each seperable in probability c\`adl\`ag time-stable
  process with non-negative values admits a LePage series
  representation.
\end{example}

\begin{example}
  Let $\bK$ be the family of locally finite measures $\mu$ on $\bR^d$
  with the arithmetic addition, identical involution, and the Borel
  $\sigma$-algebra generated by the vague topology
  \cite[Sec.~9.1]{MR2371524}. An infinitely divisible locally finite
  random measure admits a L\'evy measure, see \cite{MR2371524}. A
  countable separating family of continuous characters consists of
  \begin{displaymath}
    \chi(\mu)=\exp\left\{-\int ud\mu\right\},
  \end{displaymath}
  where $u$ belongs to an appropriately chosen countable family of
  continuous functions with compact support. If the scaling operation
  is applied to the values of measures, then $\chi(tx)\to1$ as
  $t\downarrow0$ for all $x$, so that a transversal can be constructed
  as in Remark~\ref{rem:sphere-from-h}. An alternative way of
  constructing a measurable trasversal $\bS$ is to take a sequence
  $\{B_k\}_{k\in\bN}$ of bounded sets that form a base of the topology
  and let $\mu\in\bS$ if $\mu(B_0)=\cdots=\mu(B_{n-1})=0$ and
  $\mu(B_n)=1$ for some $n$.  By
  Theorem~\ref{thr:levy-k-separating-countable}, each stable locally
  finite random measure admits the LePage representation
  \begin{displaymath}
    \mu\eqd \sum_{i\in\bN} \Gamma_i^{-1/\alpha}\eps_i 
  \end{displaymath}
  for a sequence $\{\eps_i\}_{i\in\bN}$ of i.i.d. locally finite
  measures with $\alpha$-integrable values.
\end{example}

\begin{example}
  Let $\bK$ be the family of closed sets $F$ in $\bR^d\setminus\{0\}$
  with the union as the semigroup operation, identical involution, the
  conventional scaling, and the $\sigma$-algebra generated by families
  $\{F:F\cap K\neq\emptyset\}$ for all compact sets $K$. The countable
  separating family of continuous characters is given by
  $\chi(F):=\one_{F\cap G=\emptyset}$ for open sets $G$ from
  the base of topology on $\bR^d$. Note that deterministic closed sets
  have idempotent distributions in this cone. It is known that each
  union infinitely divisible random closed set admits a L\'evy
  measure, see \cite{MR2132405}. Thus, each strictly $\alpha$-stable
  random closed set $\xi$ without idempotent factors (so that
  $\bP\{x\in \xi\}<1$ for all $x\in\bR^d$) admits the series
  representation as the union of $\Gamma_i^{-1/\alpha}\eps_i$,
  $i\in\bN$, for a sequence $\{\eps_i\}_{i\in\bN}$ of i.i.d.  random
  closed sets.
\end{example}

\begin{example}
  Let $\bK$ be the family of all non-decreasing functions
  $\Phi:\cK\mapsto\bR_+$ defined on the family $\cK$ of compact
  subsets of $\bR^d$ that vanish at the empty set and are upper
  semicontinuous, that is $\Phi(K_n)\downarrow\Phi(K)$ as
  $K_n\downarrow K$. Such functions are known as capacities (and
  sometimes are called topological pre-capacities, see
  \cite{MR1814344}). Equip $\bK$ with the semigroup operation by
  taking pointwise maximum of two capacities and the scaling of their
  values. It is shown in \cite{nor86} that an infinitely divisible
  capacity $\xi$ admits a L\'evy measure. It does not have idempotent
  factors if the essential infimum of $\xi(K)$ vanishes for all
  $K\in\cK$. By Theorem~\ref{thr:levy-k-separating-countable}, each
  strictly $\alpha$-stable capacity admits the series representation
  $\sum_{i \in \bN} \Gamma^{-1/\alpha}\eps_i$, see also
  \cite[Th.~4.1]{mol:strok15}.
\end{example}

\begin{example}
  Let $\bK$ be the family of metric measure spaces with the Cartesian
  product as the semigroup operation and the scaling applied to the
  metric, see \cite{evan:mol15} for details. This semigroup admits a
  countable separating family of characters and a measurable
  transversal. Furthermore, each infinitely divisible random element
  in $\bK$ admits a L\'evy measure and so
  Theorem~\ref{thr:levy-k-separating-countable} applies and yields the
  LePage series representation of stable metric measure spaces
  obtained in \cite[Th.~10.3]{evan:mol15}.
\end{example}

\section*{Acknowledgment}
\label{sec:acknowledgment}

The paper was initiated while SE was visiting the University of Bern
supported by the Swiss National Science Foundation.

%\bibliographystyle{abbrv} 
%\bibliographystyle{amsalpha} 
%\bibliography{se-im}

\def\cprime{$'$}

\end{document}